   \numberwithin{equation}{section}
\journal{``"} 
\newtheorem{thm}{Theorem}[section]
\newtheorem{lem}[thm]{Lemma}
\newtheorem{defn}[thm]{Definition}
\begin{document}
\begin{frontmatter}
\author{Tong Wu$^{a}$}
\ead{wut977@nenu.edu.cn}
\author{Yong Wang$^{b,*}$}
\ead{wangy581@nenu.edu.cn}
\cortext[cor]{Corresponding author.}
\address{$^a$Department of Mathematics, Northeastern University, Shenyang, 110819, China}
\address{$^b$School of Mathematics and Statistics, Northeast Normal University,
Changchun, 130024, China}

\title{The spectral Einstein functional for the Witten Deformation}
\begin{abstract}
 In the paper, given two vector fields and the Witten deformation, we compute the spectral Einstein functional for the Witten deformation on even-dimensional Riemannian manifolds without boundary.
\end{abstract}
\begin{keyword} The Witten deformation; noncommutative residue; Spectral Einstein functional.

\end{keyword}
\end{frontmatter}
\section{Introduction}
 Until now, many geometers have studied the noncommutative residues. The noncommutative residues are of great importance to the study of noncommutative geometry. Connes showed us that the noncommutative residue on a compact manifold $M$ coincided with the Dixmier's trace on pseudodifferential operators of order $-{\rm {dim}}M$ in \cite{Co2}. Therefore, the non-commutative residue can be used as integral of noncommutative geometry and become an important tool of noncommutative geometry. In \cite{Co1}, Connes used the noncommutative residue to derive a conformal 4-dimensional Polyakov action analogy.
Several years ago, Connes made a challenging observation that the noncommutative residue of the square of the inverse of the Dirac operator was proportional to the Einstein-Hilbert action, which we call the Kastler-Kalau-Walze theorem. In \cite{Ka}, Kastler gave a bruteforce proof of this theorem. In \cite{KW}, Kalau and Walze proved this theorem in the normal coordinates system simultaneously. Based on the theory of noncommutative residues proposed by Wodzicki, Fedosov, etc. \cite{FGLS} constructed the noncommutative residues of the classical elemental algebra of the Boutte de Monvel calculus on compact manifolds of dimension $n>2$.

Using elliptic pseudo-differential operators and noncommutative residues is a natural way to study the spectral Einstein functional and the operator-theoretic interpretation of the gravitational action on bounded manifolds. Concerning the Dirac operator and the signature operator, Wang carried out the computation of noncommutative residues and succeeded in proving the Kastler-Kalau-Walze type theorem for manifolds with boundary \cite{Wa1, Wa3, Wa4}. In \cite{FGV2}, Figueroa, et al. introduced a noncommutative integral based on the noncommutative residue.  Pf${\mathrm{\ddot{a}}}$ffle and Stephan \cite{pf1} considered orthogonal connections with arbitrary torsion on
compact Riemannian manifolds and computed the spectral action. In \cite{DL}, Dabrowski, Sitarz and Zalecki defined the spectral Einstein functional for a general spectral triple and for the noncommutative torus, they computed the spectral Einstein functional.
 In \cite{WWw}, Wang etc. gave some new spectral functionals which is the extension of spectral functionals to the noncommutative realm with torsion, and related them to the noncommutative residue for manifolds with boundary about Dirac operators with torsion.
 In \cite{DL2}, Dabrowski, Sitarz and Zalecki examined the metric and Einstein bilinear functionals of differential forms for the Hodge-Dirac operator $d+d^*$ on an oriented, closed, even-dimensional Riemannian manifold. Hong and Wang computed the spectral Einstein functional associated with the Dirac operator with torsion
 on even-dimensional spin manifolds without boundary in \cite{Hj}. Witten introduced an elliptic differential operator-Witten deformation in order to prove the Morse inequality (see\cite{ZW}). In \cite{B3}, Bao, Shan and Sun proved the general Kastler-Kalau-Walze type theorem about Witten deformation for any even
dimensional manifolds with boundary. {\bf The motivation} of this paper is to compute the spectral Einstein functional for the Witten deformation $d+d^*+\hat{c}(V)$ on even-dimensional Riemannian manifolds without boundary, where $d,d^*,V$ are defined in Section \ref{section:2}. Our main theorem is as follows.
 \begin{thm}\label{1thm}
 	Let $M$ be an $n=2m$ dimensional ($n\geq 3$) Riemannian manifold, for the  Witten deformation $\widetilde{D}_v$, the metric functional $\mathscr{M}_{\widetilde{D}_v}$ and the spectral Einstein functional $\mathscr{S}_{\widetilde{D}_v}$ are equal to
 	
 \begin{align}
 	\mathscr{M}_{\widetilde{D}_v}&=\mathrm{Wres}\bigg(c(u)c(w){\widetilde{D}_v^{-2m}}\bigg)=-2^{2m} \frac{2 \pi^{m}}{\Gamma\left(m\right)}\int_{M} g(u,w)d{\rm Vol}_M;\\
 	\mathscr{S}_{\widetilde{D}_v}&=\mathrm{Wres}\bigg(c(u)\big(c(w)\widetilde{D}_v+\widetilde{D}_vc(w)\big)\widetilde{D}_v^{-2m+1}\bigg)=\;2^{2m} \frac{2 \pi^{m}}{\Gamma\left(m\right)}\int_{M}\bigg(-\frac{1}{6}\mathbb{G}(u,w)+|V|^2g(u,w)\bigg)d{\rm Vol}_M,
 \end{align}
 where  $g(u,w)=\sum_{a,b=1}^{n}u_{a} w_{b} $ and $ \mathbb{G}(u,w)=\operatorname{Ric}(u,w)-\frac{1}{2} s g(u,w) $, $c(u)=\sum_{\eta=1}^{n} u_{\eta}c(e_{\eta}), c(w)=\sum_{\gamma=1}^{n} w_{\gamma}c(e_{\gamma}) .$
 \end{thm}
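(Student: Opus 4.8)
The computation runs through the local formula for the noncommutative residue: for a classical pseudodifferential operator $P$ of order $\le -n$ on the rank-$2^{2m}$ bundle $\Lambda^{*}T^{*}M$,
\[
\mathrm{Wres}(P)=\int_{M}\Big(\int_{|\xi|=1}\mathrm{tr}_{\Lambda^{*}T^{*}M}\big(\sigma_{-n}(P)(x,\xi)\big)\,d\xi\Big)\,d{\rm Vol}_{M},
\]
where $\sigma_{-n}(P)$ is the degree-$(-n)$ homogeneous component of the complete symbol of $P$. So in each case one must isolate $\sigma_{-n}$ of the relevant operator, take the Clifford fibre trace, and integrate over the cosphere and over $M$. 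Throughout one works in geodesic normal coordinates centered at an arbitrary point $x_{0}$, so that $g_{ij}(x_{0})=\delta_{ij}$, $\partial_{k}g_{ij}(x_{0})=0$, and the second derivatives of $g$ at $x_{0}$ are expressed through the curvature tensor; this annihilates most sub-leading symbol terms (those odd in $\xi$ or proportional to $\partial g$) and keeps the bookkeeping finite.

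\emph{The metric functional.} Since $c(u)c(w)$ is a bundle endomorphism (order $0$, symbol independent of $\xi$) and $\widetilde{D}_v^{-2m}$ has order $-n$ with leading symbol $|\xi|^{-2m}{\rm Id}$, the operator $c(u)c(w)\widetilde{D}_v^{-2m}$ has order exactly $-n$ and $\sigma_{-n}\big(c(u)c(w)\widetilde{D}_v^{-2m}\big)=c(u)c(w)\,|\xi|^{-2m}$, which equals $c(u)c(w)$ on $|\xi|=1$. The Clifford trace identity $\mathrm{tr}_{\Lambda^{*}T^{*}M}\big(c(e_{i})c(e_{j})\big)=-2^{2m}\delta_{ij}$ gives $\mathrm{tr}\big(c(u)c(w)\big)=-2^{2m}g(u,w)$; multiplying by ${\rm Vol}(S^{n-1})=2\pi^{m}/\Gamma(m)$ and integrating over $M$ yields the first formula. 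This case is essentially immediate.

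\emph{The Einstein functional.} One first observes that $c(w)\widetilde{D}_v+\widetilde{D}_vc(w)=\{c(w),\widetilde{D}_v\}$ is a first-order differential operator whose order-$1$ symbol is the scalar $-2\sqrt{-1}\langle w,\xi\rangle\,{\rm Id}$ and whose order-$0$ symbol is $\{c(w),\sigma_{0}(\widetilde{D}_v)\}$; hence $c(u)\{c(w),\widetilde{D}_v\}\widetilde{D}_v^{-2m+1}$ has order $2-n$, so its noncommutative residue extracts the \emph{second} sub-leading symbol term, which is exactly why $\operatorname{Ric}$ and $s$ appear. The plan is: (a) write the symbol $\sigma(\widetilde{D}_v)=\sqrt{-1}c(\xi)+\sigma_{0}(\widetilde{D}_v)(x)$ of $\widetilde{D}_v=d+d^{*}+\hat c(V)$, with $\sigma_{0}(\widetilde{D}_v)$ collecting the Levi-Civita connection terms on $\Lambda^{*}T^{*}M$ and $\hat c(V)$; (b) record that $\widetilde{D}_v^{2}$ is of Laplace type and, using the Weitzenb\"ock formula for $(d+d^{*})^{2}$ together with $\{d+d^{*},\hat c(V)\}=\sum_{i}c(e_{i})\hat c(\nabla_{e_{i}}V)$ and $\hat c(V)^{2}=|V|^{2}$, write $\widetilde{D}_v^{2}=\nabla^{*}\nabla+E$ with $E$ a curvature endomorphism plus $\sum_{i}c(e_{i})\hat c(\nabla_{e_{i}}V)+|V|^{2}$ (cf. Bao--Shan--Sun \cite{B3}); (c) compute the symbol of $\widetilde{D}_v^{-2m+1}$ to the required order, most cleanly by writing $\widetilde{D}_v^{-2m+1}=\widetilde{D}_v\,(\widetilde{D}_v^{2})^{-m}$, expanding $(\widetilde{D}_v^{2}-\lambda)^{-1}$ by the usual elliptic recursion, and applying $\tfrac{1}{2\pi\sqrt{-1}}\oint_{\Gamma}\lambda^{-m}(\,\cdot\,)\,d\lambda$; (d) compose the symbols and read off the degree-$(-n)$ part via $\sigma_{-n}(AB)=\sum_{j+k-|\alpha|=-n}\tfrac{(-\sqrt{-1})^{|\alpha|}}{\alpha!}\partial_{\xi}^{\alpha}\sigma_{j}(A)\,\partial_{x}^{\alpha}\sigma_{k}(B)$ --- here $A=c(u)\{c(w),\widetilde{D}_v\}$ contributes only $\sigma_{1}(A)$ (linear in $\xi$) and $\sigma_{0}(A)$, which shortens the list of composition terms considerably --- then take the Clifford fibre trace and integrate over $|\xi|=1$ using $\int_{|\xi|=1}\xi_{i}\xi_{j}\,d\xi=\tfrac{1}{n}{\rm Vol}(S^{n-1})\delta_{ij}$ and the corresponding degree-$4$ identity. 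Assembling the terms, the purely geometric contributions should reorganize into $-\tfrac16\big(\operatorname{Ric}(u,w)-\tfrac12 s\,g(u,w)\big)=-\tfrac16\mathbb{G}(u,w)$, with the familiar coefficient $-\tfrac16$, while the $\hat c(V)^{2}=|V|^{2}$ part of $E$ produces the extra $|V|^{2}g(u,w)$; the prefactor $2^{2m}\cdot 2\pi^{m}/\Gamma(m)$ again comes from the Clifford trace over the $2^{2m}$-dimensional fibre and the sphere volume, as in the metric case.

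\emph{The main obstacle} is step (d): keeping track of every degree-$(-n)$ contribution to the composed symbol, and then proving that after the Clifford trace over the $2^{2m}$-dimensional fibre and the cosphere integration the many connection and derivative terms collapse exactly onto $\operatorname{Ric}$, $s$ and $|V|^{2}$ --- in particular that all contributions linear in $\nabla V$, and all non-curvature combinations of $\partial^{2}g$, wash out. The Clifford trace identities for products of three and four generators, the fact that $\mathrm{tr}_{\Lambda^{*}T^{*}M}$ annihilates a product of an element of the $c$-algebra with an odd element of the $\hat c$-algebra (so, e.g., $\mathrm{tr}(c(u)c(w)\hat c(V))=0$), and the $\xi$-parity vanishing of $\int_{|\xi|=1}\xi_{i}\,d\xi$, are what enforce these cancellations; organizing that bookkeeping is the delicate part, while the metric functional and the structural reductions above are routine.
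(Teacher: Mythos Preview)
Your outline is sound and the metric-functional computation matches the paper exactly. For the Einstein functional, however, the paper organizes the calculation differently: rather than keeping the anticommutator $c(u)\{c(w),\widetilde{D}_v\}$ together and composing with the odd power $\widetilde{D}_v^{-2m+1}$, it splits
\[
\mathscr{S}_{\widetilde{D}_v}=\underbrace{\mathrm{Wres}\big(c(u)c(w)\,\widetilde{D}_v^{-2m+2}\big)}_{\mathscr{S}_1}
+\underbrace{\mathrm{Wres}\big(c(u)\widetilde{D}_v\,c(w)\widetilde{D}_v\,\widetilde{D}_v^{-2m}\big)}_{\mathscr{S}_2},
\]
so that only \emph{even} powers $(\widetilde{D}_v^{2})^{-(m-1)}$ and $(\widetilde{D}_v^{2})^{-m}$ appear. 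The payoff is that the ready-made symbol expansion of a Laplace-type inverse (their Lemma~2.4) applies directly, with no need to build the symbol of the odd-order $\widetilde{D}_v^{-2m+1}$; $\mathscr{S}_1$ then reduces to multiplying $c(u)c(w)$ into a known $\sigma_{-2m}$, and $\mathscr{S}_2$ is handled by setting $\mathcal{A}=c(u)\widetilde{D}_v$, $\mathcal{B}=c(w)\widetilde{D}_v$ and expanding $\sigma_{-2m}(\mathcal{A}\mathcal{B}\,\widetilde{D}_v^{-2m})$ through the usual composition rule, term by term. Your route---exploiting that $\sigma_1(\{c(w),\widetilde{D}_v\})=-2\sqrt{-1}\langle w,\xi\rangle\,\mathrm{Id}$ is scalar---is perfectly legitimate and makes the Clifford side lighter, but it shifts the work into computing two subleading symbol levels of $\widetilde{D}_v^{-2m+1}$ (or equivalently an extra composition with $\sigma(\widetilde{D}_v)$), whereas the paper's split lets one quote the Laplace-type formulas verbatim. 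Both approaches rely on the same trace/parity cancellations you identify to kill the $\nabla V$ and mixed $c/\hat c$ terms.
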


This paper is organized as follows. In Section \ref{section:2}, we introduce some notations about Clifford action and the Witten deformation, and symbols of the generalized laplacian for the Witten deformation are given. Using the residue for a differential operator of Laplace type ${\rm Wres}(P):=\int_{S^*M}{\rm tr}(\sigma_{-n}^P)(x,\xi)$and the composition formula of pseudo-differential operators, we obtain the metric functional $\mathscr{M}_{\widetilde{D}_v}$ and the spectral Einstein functional $\mathscr{S}_{\widetilde{D}_v}$ for the the Witten deformation on even-dimensional Riemannian manifolds without boundary in Section \ref{section:3}.
\section{Clifford Actions and the Witten Deformation}
\label{section:2}
Firstly, we introduce some notations about Clifford action and the Witten deformation, see in Section 4 in \cite{ZW}.

Let $M$ be an $n=2m$-dimensional ($n\geq 3$) oriented compact Riemannian manifold with a Riemannian metric $g^{TM}$, $TM$ (resp. $T^*M$) denote the tangent (resp. cotangent) vector bundle of $M$. And let $\nabla^L$ be the Levi-Civita connection about $g^{TM}$. In the
fixed orthonormal frame $\{e_1,\cdots,e_n\}$ in $TM$, the connection matrix $(\omega_{s,t})$ is defined by
\begin{equation}
\label{eq1}
\nabla^L(e_1,\cdots,e_n)= (e_1,\cdots,e_n)(\omega_{s,t}).
\end{equation}

 Let $c(e), \hat{c}(e)$ be the Clifford operators acting on the exterior algebra bundle $\Lambda^*(T^*M)$ of $T^*M$ defined by
 $$c(e)=e^*\wedge-i_e,~~~\hat{c}(e)=e^*\wedge+i_e,$$
where $e^*$ and $i_e$ are the notation for the exterior and interior multiplications respectively.
For  $\{e_1,\cdots,e_n\}$, one has
\begin{align}
\label{ali1}
&\hat{c}(e_i)\hat{c}(e_j)+\hat{c}(e_j)\hat{c}(e_i)=2g^{TM}(e_i,e_j);~~\nonumber\\
&c(e_i)c(e_j)+c(e_j)c(e_i)=-2g^{TM}(e_i,e_j);~~\nonumber\\
&c(e_i)\hat{c}(e_j)+\hat{c}(e_j)c(e_i)=0.
\end{align}

Let $\{e^1,\cdots,e^n\}$ is the corresponding dual basis in $T^*M$, for any integer $k$ between 1 and $n$, then the Hodge star operator
$\ast:\Lambda^*(T^*M)\rightarrow\Lambda^*(T^*M)$ can be defined as follows:
$$\ast:e^1\wedge\cdot\cdot\cdot\wedge e^k\mapsto e^{k+1}\wedge\cdot\cdot\cdot\wedge e^n.$$

From some properties of the Hodge star operator, one can define inner product $\langle \cdot, \cdot\rangle$ on $\Omega^*(M)$ as follows:
$$\langle \alpha, \beta\rangle=\int_M\alpha\wedge\ast\beta,$$
where $\Omega^*(M):=\Gamma(\Lambda^*(T^*M))$ denotes the space of smooth sections of $\Lambda^*(T^*M)$.

Recall that $d:\Omega^*(M)\rightarrow\Omega^*(M)$ is the exterior differential operator on $M$.
\begin{defn}\cite{ZW}
Let $d^*:\Omega^*(M)\rightarrow\Omega^*(M)$ be the operator defined by
\begin{align}
\alpha\in\Omega^k(M)\mapsto(-1)^{nk+n+1}\ast d\ast \alpha \in\Omega^{k-1}(M),
\end{align}
where $\Omega^k(M):=\Gamma(\Lambda^p(T^*M))$ denotes the space of smooth p-forms over on $M$.
\end{defn}
For $\alpha\in \Omega^k(M),\beta\in \Omega^{k+1}(M)$, which satisfies $\langle d\alpha, \beta\rangle=\langle \alpha, d^*\beta\rangle.$ That is, $d^*$ is the formal adjoint of $d$.
\begin{defn}\cite{ZW}
The de Rham-Hodge operator $\widetilde{D}$ associated to $g^{TM}$ is defined by
\begin{align}
\widetilde{D}=d+d^*=\sum^n_{i=1}c(e_i)\nabla^{\Lambda^*(T^*M)}_{e_i}=\sum^n_{i=1}c(e_i)\bigg[e_i+\frac{1}{4}\sum_{s,t}\omega_{s,t}
(e_i)[\widehat{c}(e_s)\widehat{c}(e_t)
-c(e_s)c(e_t)]\bigg].
\end{align}
\end{defn}
Following \cite{Wi}, for a vector field $V$, set the Witten deformation
$$\widetilde{D}_v=d+d^*+\hat{c}(V)=\sum^n_{i=1}c(e_i)\nabla^{\Lambda^*(T^*M)}_{e_i}+\hat{c}(V)=\sum^n_{i=1}c(e_i)\bigg[e_i+\frac{1}{4}\sum_{s,t}\omega_{s,t}
(e_i)[\widehat{c}(e_s)\widehat{c}(e_t)
-c(e_s)c(e_t)]\bigg]+\hat{c}(V).$$

For a differential operator of Laplace type $P$, it has locally the form
\begin{equation}\label{p}
	P=-(g^{ij}\partial_i\partial_j+A^i\partial_i+B),
\end{equation}
where $\partial_{i}$  is a natural local frame on $TM,$ $(g^{ij})_{1\leq i,j\leq n}$ is the inverse matrix associated to the metric
matrix  $(g_{ij})_{1\leq i,j\leq n}$ on $M,$ $A^{i}$ and $B$ are smooth sections of $\textrm{End}(N)$ on $M$ (endomorphism).
If $P$ satisfies the form \eqref{p}, then there is a unique
connection $\nabla$ on $N$ and a unique endomorphism $\widetilde{E}$ such that
\begin{equation}
	P=-[g^{ij}(\nabla_{\partial_{i}}\nabla_{\partial_{j}}- \nabla_{\nabla^{L}_{\partial_{i}}\partial_{j}})+\widetilde{E}].
\end{equation}

Moreover
(with local frames of $T^{*}M$ and $N$), $\nabla_{\partial_{i}}=\partial_{i}+\omega_{i} $
and $\widetilde{E}$ are related to $g^{ij}$, $A^{i}$ and $B$ through
\begin{eqnarray}
	&&\omega_{i}=\frac{1}{2}g_{ij}\big(A^{i}+g^{kl}\Gamma_{ kl}^{j} \texttt{id}\big),\nonumber\\
	&&\widetilde{E}=B-g^{ij}\big(\partial_{i}(\omega_{j})+\omega_{i}\omega_{j}-\omega_{k}\Gamma_{ ij}^{k} \big),
\end{eqnarray}
where $\Gamma_{ kl}^{j}$ is the  Christoffel coefficient of $\nabla^{L}$.

From \cite{DL} ,we get
\begin{align} \label{dt2}
\widetilde{D}_v
^{2}=-g^{ab}(\nabla_{{\partial}_a}\nabla_{{\partial}_b}-\nabla_{{\nabla_{{\partial}_a}^{L}}\partial_b})+E,
\end{align}
where $E=-\widetilde{E}, \nabla_{{\partial}_a}={\partial}_a-\widetilde{T}_a$.

By Propsition 4.6 in \cite{ZW}, the following identity holds:
$$\widetilde{D}_v
^{2}=\widetilde{D}^2+\sum_{i=1}^nc(e_i)\hat{c}(\nabla^{TM}_{e_i}V)+|V|^2.$$
\indent By \cite{Y}, the local expression of $\widetilde{D}^2$
\begin{equation}\label{eq8}
\widetilde{D}^2
=-\triangle+\frac{1}{8}\sum_{ijkl}R_{ijkl}\widehat{c}(e_i)\widehat{c}(e_j)c(e_k)c(e_l)+\frac{1}{4}s;
\end{equation}
\begin{equation}
\label{eq9}
-\triangle=-g^{ij}(\nabla^{\Lambda^*(T^*M)}_{i}\nabla^{\Lambda^*(T^*M)}_{j}-\Gamma_{ij}^{k}\nabla^{\Lambda^*(T^*M)}_{k}),
\end{equation}
where $R$ (resp. $s$) denote the Riemannian curvature (resp. scalar curvature).

Define $\omega_{s,t}
(e_i)=-\langle \nabla_{e_i}^{L}e_{s}, e_{t}\rangle $, we get
\begin{align} \label{ta1}
	\widetilde{T}_a&=-\frac{1}{4}\sum_{s, t=1}^{n}\langle \nabla_{{\partial}_a}^{L}e_{s}, e_{t}\rangle c(e_{s})c(e_{t})+\frac{1}{4}\sum_{s, t=1}^{n}\langle \nabla_{{\partial}_a}^{L}e_{s}, e_{t}\rangle \hat{c}(e_{s})\hat{c}(e_{t});\nonumber\\
E&=\frac{1}{8}\sum_{i,j,k,l=1}^nR_{ijkl}\hat{c}(e_i)\hat{c}(e_j)c(e_k)c(e_l)+\frac{1}{4}s+\sum_{i=1}^nc(e_i)\hat{c}(\nabla^{TM}_{e_i}V)+|V|^2.
\end{align}
In normal coordinates, $\widetilde{T}_a$ is expanded near $x=0$ by Taylor expansion. That is
\begin{align} \label{ta2}
	\widetilde{T}_a={T}_a+{T}_{ab}x^b+O(x^2).
\end{align}

By ${\partial}_{l}\langle \nabla_{{\partial}_a}^{L}e_{s}, e_{t}\rangle(x_0)=\frac{1}{2}{\rm R}_{lats}(x_0)$, we get
\begin{align} \label{ta0}
	{T}_a&=0;\nonumber\\
	{T}_{ab}&=-\frac{1}{8}\sum_{st}R_{bats}(x_0) c(e_{s})c(e_{t})+\frac{1}{8}\sum_{st}R_{bats}(x_0) \hat{c}(e_{s})\hat{c}(e_{t}).
\end{align}

 \begin{lem}\label{lem1}\cite{DL}
	The leading symbols of the generalized laplacian $\Delta_{T,E}^{-m}$ are the followings:
	\begin{align}\label{sigma0}
		\sigma_{-2 m}(\Delta_{T,E}^{-m})=&\|\xi\|^{-2 m-2}\sum_{a,b,j,k=1}^{2m}\left(\delta_{a b}-\frac{m}{3} R_{a j b k} x^{j} x^{k}\right) \xi_{a} \xi_{b}+O\left(\mathbf{x}^{3}\right) ;\\
		\sigma_{-2m-1}(\Delta_{T,E}^{-m})=& \frac{-2 m i}{3}\|\xi\|^{-2 m-2} \sum_{a,k=1}^{2m}\operatorname{Ric}_{a k} x^{k} \xi_{a}-2 m i\|\xi\|^{-2 m-2}\sum_{a,b=1}^{2m}\left(T_{a} \xi_{a}+T_{a b} x^{b} \xi_{a}\right)+O(\mathbf{x^2}) ;\\
		\sigma_{-2m-2}(\Delta_{T,E}^{-m})=& \frac{m(m+1)}{3}\|\xi\|^{-2 m-4} \sum_{a,b=1}^{2m}\operatorname{Ric}_{a b} \xi_{a} \xi_{b} \\
		&-2 m(m+1)\|\xi\|^{-2 m-4}\sum_{a,b=1}^{2m} T_{a} T_{b} \xi_{a} \xi_{b}+m\sum_{a,b=1}^{2m}\left(T_{a} T_{a}-T_{a a}\right)\|\xi\|^{-2 m-2}\nonumber \\
		&+2 m(m+1)\|\xi\|^{-2 m-4} \sum_{a,b=1}^{2m}T_{a b} \xi_{a} \xi_{b}-mE |\xi\|^{-2m-2}+O(\mathbf{x}) ,\nonumber
	\end{align}
where $Ric$ denotes Ricci curvature.
\end{lem}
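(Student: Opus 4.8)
The plan is to obtain the three expansions directly from the local form \eqref{dt2} of $\Delta_{T,E}=\widetilde{D}_v^{2}$ by computing the symbol of its resolvent and then integrating out the resolvent parameter. I work in Riemannian normal coordinates centred at a fixed $x_0$, where $g_{ab}(x)=\delta_{ab}-\tfrac13 R_{ajbk}x^{j}x^{k}+O(\bx^{3})$, hence $g^{ab}(x)=\delta_{ab}+\tfrac13 R_{ajbk}x^{j}x^{k}+O(\bx^{3})$, the Christoffel symbols vanish at $x_0$, and $\widetilde{T}_a=T_a+T_{ab}x^{b}+O(\bx^{2})$ with $T_a=0$ by \eqref{ta0}. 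First I would expand $\nabla_{\partial_a}=\partial_a-\widetilde{T}_a$ and $\nabla_{\nabla^{L}_{\partial_a}\partial_b}$ in \eqref{dt2} to write $\Delta_{T,E}=-(g^{ab}\partial_a\partial_b+A^{c}\partial_c+B)$ as in \eqref{p}, and read off its complete symbol $p=p_2+p_1+p_0$ with $p_2=g^{ab}\xi_a\xi_b$; here $p_1$ is linear in $\xi$ and collects $\widetilde{T}_a$ together with the derivatives of the Christoffel symbols, while $p_0$ collects $\partial_a\widetilde{T}_b$, the products $\widetilde{T}_a\widetilde{T}_b$, and the endomorphism $E$.

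Second, I would introduce a resolvent parameter $\lambda$ and determine the symbol $\sum_{j\ge 0}r_{-2-j}$ of $(\lambda-\Delta_{T,E})^{-1}$ from the composition identity $\sum_{\alpha}\tfrac{1}{\alpha!}\partial_\xi^{\alpha}\sigma(\lambda-\Delta_{T,E})\,D_x^{\alpha}r=1$, where $D_x=-\sqrt{-1}\,\partial_x$ and $p_1=-\sqrt{-1}\,A^{c}\xi_c$. This yields the recursion $r_{-2}=(\lambda-p_2)^{-1}$, then $r_{-3}=(\lambda-p_2)^{-1}\big(p_1 r_{-2}+\sum_a\partial_{\xi_a}p_2\,D_{x_a}r_{-2}\big)$, and a longer expression for $r_{-4}$ involving in addition $p_0$, the second $\xi$-derivatives of $p_2$, and products of the lower symbols. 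The two factors of $\sqrt{-1}$ make $r_{-3}$ purely imaginary, which is the source of the $\sqrt{-1}$ in $\sigma_{-2m-1}$. Since I expand about $x_0$, the $x$-dependence enters only through the curvature term in $g^{ab}$ and through $T_{ab}x^{b}$, so each $r_{-2-j}$ is a rational function of $(\xi,\lambda)$ whose polynomial-in-$x$ coefficients carry precisely the data $R_{ajbk}$, $T_a$, $T_{ab}$ and $E$.

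Third, I would recover the negative power through the Cauchy formula $\Delta_{T,E}^{-m}=\tfrac{1}{2\pi i}\oint_{\Gamma}\lambda^{-m}(\lambda-\Delta_{T,E})^{-1}\,d\lambda$, so that $\sigma_{-2m-j}(\Delta_{T,E}^{-m})=\tfrac{1}{2\pi i}\oint_{\Gamma}\lambda^{-m}r_{-2-j}\,d\lambda$ for $j=0,1,2$. The only integrals needed are the elementary ones
\begin{equation*}
\frac{1}{2\pi i}\oint_{\Gamma}\lambda^{-m}(\lambda-\|\xi\|^{2})^{-k}\,d\lambda=\frac{(-1)^{k-1}}{(k-1)!}\frac{(m+k-2)!}{(m-1)!}\,\|\xi\|^{-2(m+k-1)},
\end{equation*}
whose values for $k=1,2,3$ are $\|\xi\|^{-2m}$, $-m\|\xi\|^{-2m-2}$ and $\tfrac{m(m+1)}{2}\|\xi\|^{-2m-4}$. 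These are exactly the weights that turn the curvature coefficient $\tfrac13$ into the stated $\tfrac{m}{3}$ in $\sigma_{-2m}$ and $\tfrac{m(m+1)}{3}$ in $\sigma_{-2m-2}$, and that produce the prefactors $-2m$, $\pm 2m(m+1)$ and $m$ multiplying the $T$- and $E$-terms.

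Finally, I would set $T_a=0$, contract $\sum_j R_{ajbj}$ into $\operatorname{Ric}_{ab}$, and regroup by $\xi$-homogeneity to reach the three displayed formulas, with remainders $O(\bx^{3})$, $O(\bx^{2})$, $O(\bx)$ inherited from the orders to which $g^{ab}$ and $\widetilde{T}_a$ were expanded. The main obstacle is the bookkeeping in the last two steps rather than any single estimate: the Ricci contributions to $\sigma_{-2m-1}$ and $\sigma_{-2m-2}$ arise simultaneously from the curvature correction in $p_2$ and from the $D_x$-derivatives falling on $r_{-2}$ and $r_{-3}$, so the delicate point is to add these two sources with the correct contour weights and check that all cross terms between $p_1$, $p_0$ and the $\xi$-derivatives of $p_2$ combine into exactly the coefficients $\tfrac{m}{3}$, $\tfrac{m(m+1)}{3}$, $2m(m+1)$ and $m$ appearing in the statement.
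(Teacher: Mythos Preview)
The paper does not prove this lemma at all; it is quoted verbatim from \cite{DL} with no argument given. Your resolvent-plus-Cauchy approach---build the parametrix symbols $r_{-2},r_{-3},r_{-4}$ of $(\lambda-\Delta_{T,E})^{-1}$ by the standard recursion in normal coordinates, then extract $\Delta_{T,E}^{-m}$ via the contour integral---is precisely the way such expansions are derived in the cited source, and your contour identities for $\tfrac{1}{2\pi i}\oint\lambda^{-m}(\lambda-\|\xi\|^{2})^{-k}\,d\lambda$ are correct and do produce the combinatorial prefactors $m$, $\tfrac{m}{3}$, $2m(m+1)$, $\tfrac{m(m+1)}{3}$ appearing in the statement.

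One inconsistency to fix: the lemma is formulated for a general Taylor expansion $\widetilde{T}_a=T_a+T_{ab}x^{b}+O(\bx^{2})$ and the terms $T_a$, $T_aT_b$, $T_aT_a$ appear explicitly in the displayed formulas for $\sigma_{-2m-1}$ and $\sigma_{-2m-2}$. You invoke \eqref{ta0} at the outset and again at the end to set $T_a=0$, but that specialization belongs to the \emph{next} lemma (the one computing the symbols for the Witten deformation), not to this one. Keep $T_a$ symbolic throughout your recursion; nothing in your method requires it to vanish, and the extra terms $-2miT_a\xi_a\|\xi\|^{-2m-2}$, $-2m(m+1)T_aT_b\xi_a\xi_b\|\xi\|^{-2m-4}$, $mT_aT_a\|\xi\|^{-2m-2}$ then drop out of $r_{-3}$ and $r_{-4}$ with the same contour weights you already listed.
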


By \eqref{ta1}, \eqref{ta0} and lemma \ref{lem1}, we get the following lemma.

\begin{lem}\label{lem2}
	The general dimensional symbols of the Witten deformation are given:
	\begin{align}\label{sigma}
		\sigma_{-2 m}(\Delta_{T,E}^{-m})=&\|\xi\|^{-2 m-2}\sum_{a,b,j,k=1}^{2m} \left(\delta_{a b}-\frac{m}{3} R_{a j b k} x^{j} x^{k}\right) \xi_{a} \xi_{b}+O\left(\mathbf{x}^{3}\right); \\
		\sigma_{-2m-1}(\Delta_{T,E}^{-m})=& \frac{-2 mi}{3}\|\xi\|^{-2 m-2}\sum_{a,b=1}^{2m}  \operatorname{Ric}_{a b} x^{b} \xi_{a}\\
		&+\frac{ m i}{4}\|\xi\|^{-2 m-2} \sum_{a,b,t,s=1}^{2m} \operatorname{R}_{b a t s}(x_0) c(e_s) c(e_t) x^{b} \xi_{a}\nonumber\\
		&-\frac{ m i}{4}\|\xi\|^{-2 m-2} \sum_{a,b,t,s=1}^{2m} \operatorname{R}_{b a t s}(x_0) \hat{c}(e_s) \hat{c}(e_t) x^{b} \xi_{a} +O\left(\mathbf{x}^{2}\right);\nonumber\\
		\sigma_{-2m-2}(\Delta_{T,E}^{-m})=& \frac{m(m+1)}{3}\|\xi\|^{-2 m-4}\sum_{a,b=1}^{2m} \operatorname{Ric}_{a b} \xi_{a} \xi_{b} \\
		&-\frac{ m (m+1)}{4}\|\xi\|^{-2 m-4} \sum_{a,b,t,s=1}^{2m}\operatorname{R}_{b a t s}(x_0) c(e_s) c(e_t) \xi_{a} \xi_{b}\nonumber\\
&+\frac{ m (m+1)}{4}\|\xi\|^{-2 m-4} \sum_{a,b,t,s=1}^{2m}\operatorname{R}_{b a t s}(x_0) \hat{c}(e_s) \hat{c}(e_t) \xi_{a} \xi_{b}\nonumber\\
		&-\frac{1}{8}m\|\xi\|^{-2 m-2}\sum_{ijkl}R_{ijkl}\hat{c}(e_i) \hat{c}(e_j)c(e_k) c(e_l)-
\frac{1}{4}m \|\xi\|^{-2 m-2}s\nonumber\\
		&-m\|\xi\|^{-2 m-2}\sum_{i=1}^{2m}c(e_i)\hat{c}(\nabla^{TM}_{e_i}V)-m\|\xi\|^{-2 m-2}|V|^2+O\left(\mathbf{x}\right),\nonumber
	\end{align}
where $i$ in $mi$ represents the imaginary unit, and $i$ in other subscript positions represents the $i$-th.
\end{lem}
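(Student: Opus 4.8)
The plan is to obtain Lemma~\ref{lem2} as a direct specialization of Lemma~\ref{lem1}. Both statements concern the same operator $\Delta_{T,E}^{-m}=\widetilde{D}_v^{-2m}$; the difference is only that Lemma~\ref{lem1} records its symbols in terms of the abstract data $T_a$, $T_{ab}$, $E$, while Lemma~\ref{lem2} wants them written out explicitly in normal coordinates. So the whole proof reduces to inserting, into the three symbol formulas of Lemma~\ref{lem1}, the normal-coordinate values $T_a=0$ and $T_{ab}=-\tfrac18\sum_{st}R_{bats}(x_0)c(e_s)c(e_t)+\tfrac18\sum_{st}R_{bats}(x_0)\hat{c}(e_s)\hat{c}(e_t)$ from \eqref{ta0}, together with $E=\tfrac18\sum_{ijkl}R_{ijkl}\hat{c}(e_i)\hat{c}(e_j)c(e_k)c(e_l)+\tfrac14 s+\sum_i c(e_i)\hat{c}(\nabla^{TM}_{e_i}V)+|V|^2$ from \eqref{ta1}, and then collecting terms.

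First I would record the two identities that do most of the work. Since $T_a=0$, the term $-2mi\|\xi\|^{-2m-2}\sum_a T_a\xi_a$ in $\sigma_{-2m-1}$ vanishes, and in $\sigma_{-2m-2}$ both $-2m(m+1)\|\xi\|^{-2m-4}\sum T_aT_b\xi_a\xi_b$ and $m\sum_a T_aT_a\|\xi\|^{-2m-2}$ vanish, so only the terms linear in $T_{ab}$ survive; moreover $\sum_a T_{aa}=0$, because it carries the factor $R_{aats}(x_0)$, which is zero by the antisymmetry of the Riemann tensor in its first pair of indices, so the term $-m\sum_a T_{aa}\|\xi\|^{-2m-2}$ drops as well. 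Then $\sigma_{-2m}$ contains neither $T$ nor $E$ and is copied verbatim; in $\sigma_{-2m-1}$ the surviving piece $-2mi\|\xi\|^{-2m-2}\sum T_{ab}x^b\xi_a$ becomes $\tfrac{mi}{4}\|\xi\|^{-2m-2}\sum R_{bats}(x_0)c(e_s)c(e_t)x^b\xi_a-\tfrac{mi}{4}\|\xi\|^{-2m-2}\sum R_{bats}(x_0)\hat{c}(e_s)\hat{c}(e_t)x^b\xi_a$; and in $\sigma_{-2m-2}$ the surviving piece $2m(m+1)\|\xi\|^{-2m-4}\sum T_{ab}\xi_a\xi_b$ becomes $-\tfrac{m(m+1)}{4}\|\xi\|^{-2m-4}\sum R_{bats}(x_0)c(e_s)c(e_t)\xi_a\xi_b+\tfrac{m(m+1)}{4}\|\xi\|^{-2m-4}\sum R_{bats}(x_0)\hat{c}(e_s)\hat{c}(e_t)\xi_a\xi_b$, while $-mE\|\xi\|^{-2m-2}$ expands into the four remaining terms built from $R_{ijkl}\hat{c}(e_i)\hat{c}(e_j)c(e_k)c(e_l)$, $s$, $\sum_i c(e_i)\hat{c}(\nabla^{TM}_{e_i}V)$ and $|V|^2$, all carrying the prefactor $-m\|\xi\|^{-2m-2}$ with coefficients $\tfrac18$, $\tfrac14$, $1$, $1$ respectively. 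Reassembling the pieces produces exactly the asserted formulas \eqref{sigma}, and the orders of the error terms are unchanged since $T_a$, $T_{ab}$ and $E$ are all frozen at $x_0$.

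There is no genuine analytic obstacle; the proof is pure bookkeeping. The place demanding care is the sign and Clifford-ordering accounting as $T_{ab}$ and $E$ are pulled out of the prefactors $-2mi$, $2m(m+1)$ and $-m$: the two halves of $T_{ab}$ carry opposite signs, the pairs $\hat{c}(e_s)\hat{c}(e_t)$ and $c(e_s)c(e_t)$ must not be interchanged, and, as the statement warns, the imaginary unit $i$ in $mi$ must be kept distinct from the summation index $i$ appearing in, e.g., $\sum_i c(e_i)\hat{c}(\nabla^{TM}_{e_i}V)$. Verifying that the coefficients settle on $\tfrac{mi}{4}$, $\tfrac{m(m+1)}{4}$, $\tfrac{m}{8}$, $\tfrac{m}{4}$ as claimed is the only substantive check.
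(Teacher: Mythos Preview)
Your proposal is correct and follows exactly the same route as the paper, which simply states ``By \eqref{ta1}, \eqref{ta0} and Lemma~\ref{lem1}'' and then records the specialized formulas without further comment. Your write-up is in fact more explicit than the paper's: you spell out why the $T_aT_b$, $T_aT_a$ and $\sum_a T_{aa}$ contributions all vanish (the last by the antisymmetry $R_{aats}=0$), and you track each coefficient through the substitution, whereas the paper leaves all of this implicit.
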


 \section{The spectral Einstein functional for the Witten Deformation}
 \label{section:3}
In this section, we want to obtain the metric functional and the spectral Einstein functional for the Witten deformation.

For a pseudo-differential operator  $P$, acting on sections of a vector bundle over an even n-dimensional compact Riemannian manifold  M , the analogue of the volume element in noncommutative geometry is the operator  $D^{-n}:=d s^{n} $. And pertinent operators are realized as pseudo-differential operators on the spaces of sections. Extending previous definitions by Connes \cite{co5}, a noncommutative integral was introduced in \cite{FGV2} based on the noncommutative residue \cite{wo2}, combine (1.4) in \cite{co4} and \cite{Ka}, using the definition of the residue:
\begin{align}\label{wers}
	\int P d s^{n}:=\operatorname{Wres} (P D^{-n}):=\int_{S^{*} M} \operatorname{tr}\left[\sigma_{-n}\left(P D^{-n}\right)\right](x, \xi),
\end{align}

where  $\sigma_{-n}\left(P D^{-n}\right) $ denotes the  $(-n)$th order piece of the complete symbols of  $P D^{-n} $,  $\operatorname{tr}$  as shorthand of trace.

Firstly, we review here technical tool of the computation, which are the integrals of polynomial functions over the unit spheres. By (32) in \cite{B1}, we define
\begin{align}
I_{S_n}^{\gamma_1\cdot\cdot\cdot\gamma_{2\bar{n}+2}}=\int_{|x|=1}d^nxx^{\gamma_1}\cdot\cdot\cdot x^{\gamma_{2\bar{n}+2}},
\end{align}
i.e. the monomial integrals over a unit sphere.
Then by Proposition A.2. in \cite{B1},  polynomial integrals over higher spheres in the $n$-dimesional case are given
\begin{align}
I_{S_n}^{\gamma_1\cdot\cdot\cdot\gamma_{2\bar{n}+2}}=\frac{1}{2\bar{n}+n}[\delta^{\gamma_1\gamma_2}I_{S_n}^{\gamma_3\cdot\cdot\cdot\gamma_{2\bar{n}+2}}+\cdot\cdot\cdot+\delta^{\gamma_1\gamma_{2\bar{n}+1}}I_{S_n}^{\gamma_2\cdot\cdot\cdot\gamma_{2\bar{n}+1}}],
\end{align}
where $S_n\equiv S^{n-1}$ in $\mathbb{R}^n$.

For $\bar{n}=0$, we have $I^0={\rm Vol}(S^{n-1})$=$\frac{2\pi^{\frac{n}{2}}}{\Gamma(\frac{n}{2})}$, we immediately get
\begin{align}
I_{S_n}^{\gamma_1\gamma_2}&=\frac{1}{n}{\rm Vol}(S^{n-1})\delta^{\gamma_1\gamma_2};\nonumber\\
I_{S_n}^{\gamma_1\gamma_2\gamma_3\gamma_4}&=\frac{1}{n(n+2)}{\rm Vol}(S^{n-1})[\delta^{\gamma_1\gamma_2}\delta^{\gamma_3\gamma_4}+\delta^{\gamma_1\gamma_3}\delta^{\gamma_2\gamma_4}+\delta^{\gamma_1\gamma_4}\delta^{\gamma_2\gamma_3}].
\end{align}

 \begin{thm}\label{thm}
 	Let $M$ be an $n=2m$ dimensional ($n\geq 3$) Riemannian manifold, the metric functional $\mathscr{M}_{\widetilde{D}_v}$ and the spectral Einstein functional $\mathscr{S}_{\widetilde{D}_v}$ are equal to
 	
 \begin{align}
 	\mathscr{M}_{\widetilde{D}_v}&=\mathrm{Wres}\bigg(c(u)c(w){\widetilde{D}_v^{-2m}}\bigg)=-2^{2m} \frac{2 \pi^{m}}{\Gamma\left(m\right)}\int_{M} g(u,w)d{\rm Vol}_M;\\
 	\mathscr{S}_{\widetilde{D}_v}&=\mathrm{Wres}\bigg(c(u)\big(c(w)\widetilde{D}_v+\widetilde{D}_vc(w)\big)\widetilde{D}_v^{-2m+1}\bigg)=\;2^{2m} \frac{2 \pi^{m}}{\Gamma\left(m\right)}\int_{M}\bigg(-\frac{1}{6}\mathbb{G}(u,w)+|V|^2g(u,w)\bigg)d{\rm Vol}_M,
 \end{align}
 where  $g(u,w)=\sum_{a,b=1}^{n}u_{a} w_{b} $ and $ \mathbb{G}(u,w)=\operatorname{Ric}(u,w)-\frac{1}{2} s g(u,w) $, $c(u)=\sum_{\eta=1}^{n} u_{\eta}c(e_{\eta}), c(w)=\sum_{\gamma=1}^{n} w_{\gamma}c(e_{\gamma}) .$
 \end{thm}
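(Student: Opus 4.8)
The plan is to compute each Wodzicki residue directly from the symbol calculus, using the definition $\mathrm{Wres}(P)=\int_{S^*M}\mathrm{tr}(\sigma_{-n}^P)(x,\xi)\,dx\,d\xi$ together with Lemma~\ref{lem2}, which already supplies the symbol expansion of $\widetilde{D}_v^{-2m}$ in normal coordinates centered at a point $x_0$. Since everything is evaluated at $x_0$ where $T_a=0$ and the metric is Euclidean to leading order, the computation reduces to algebra in the Clifford bundle plus the sphere integrals $I_{S_n}^{\gamma_1\cdots\gamma_{2\bar n+2}}$ recorded just above the theorem.

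For the metric functional $\mathscr{M}_{\widetilde{D}_v}=\mathrm{Wres}\big(c(u)c(w)\widetilde{D}_v^{-2m}\big)$, I would first observe that multiplication by the endomorphism $c(u)c(w)$ is a zeroth-order operator, so $\sigma_{-2m}\big(c(u)c(w)\widetilde{D}_v^{-2m}\big)=c(u)c(w)\,\sigma_{-2m}(\widetilde{D}_v^{-2m})+(\text{terms involving lower symbols of }\widetilde{D}_v^{-2m}\text{ paired with }\xi\text{-derivatives of }c(u)c(w)=0)$; since $c(u)c(w)$ is $\xi$-independent the composition is just pointwise multiplication. Evaluating at $x_0$ kills the curvature correction in $\sigma_{-2m}(\widetilde{D}_v^{-2m})$, leaving $\|\xi\|^{-2m-2}\sum_{a,b}\delta_{ab}\xi_a\xi_b=\|\xi\|^{-2m}$ on the cosphere $\|\xi\|=1$. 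Then $\mathscr{M}_{\widetilde{D}_v}=\int_{S^*M}\mathrm{tr}\big(c(u)c(w)\big)\,dx\,d\xi$, and using $\mathrm{tr}\big(c(e_\eta)c(e_\gamma)\big)=-2^{2m}\delta_{\eta\gamma}$ on the $2^{2m}$-dimensional spinor/exterior bundle together with $\mathrm{Vol}(S^{n-1})=2\pi^m/\Gamma(m)$ gives the stated formula $-2^{2m}\frac{2\pi^m}{\Gamma(m)}\int_M g(u,w)\,d\mathrm{Vol}_M$.

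For the spectral Einstein functional the operator in front is $c(u)\big(c(w)\widetilde{D}_v+\widetilde{D}_vc(w)\big)\widetilde{D}_v^{-2m+1}$. The first step is to simplify $c(w)\widetilde{D}_v+\widetilde{D}_vc(w)$: writing $\widetilde{D}_v=\sum_i c(e_i)\nabla_{e_i}^{\Lambda^*}+\hat c(V)$ and using the Clifford relations \eqref{ali1}, the anticommutator $\{c(w),\widetilde{D}_v\}$ equals $-2\sum_\gamma w_\gamma\nabla_{e_\gamma}^{\Lambda^*}+(\text{zeroth-order Clifford terms})+\{c(w),\hat c(V)\}$, where the last bracket vanishes by the third relation in \eqref{ali1}. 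Thus $c(u)\{c(w),\widetilde{D}_v\}\widetilde{D}_v^{-2m+1}$ is, to leading order in the symbol, $-2\sum_{\eta,\gamma}u_\eta w_\gamma c(e_\eta)\,\big(\partial_{e_\gamma}+\text{connection}\big)\widetilde{D}_v^{-2m+1}$ plus lower-order endomorphism contributions; I then expand $\widetilde{D}_v^{-2m+1}=\widetilde{D}_v\cdot\widetilde{D}_v^{-2m}$ using the symbol of $\widetilde{D}_v$ and Lemma~\ref{lem2}, collect the degree $-2m$ part of the full composite symbol via the standard formula $\sigma(PQ)\sim\sum_\alpha\frac{1}{\alpha!}\partial_\xi^\alpha\sigma(P)\,D_x^\alpha\sigma(Q)$, and take the fiber trace. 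The curvature terms of $\sigma_{-2m-2}(\widetilde{D}_v^{-2m})$ — namely the $R_{jkl}\hat c\hat c c c$ term, the scalar curvature $\tfrac14 s$, the $\sum_i c(e_i)\hat c(\nabla_{e_i}^{TM}V)$ term and $|V|^2$ — contribute through the trace identities $\mathrm{tr}(\hat c(e_i)\hat c(e_j)c(e_k)c(e_l))$, $\mathrm{tr}(c(e_\eta)c(e_k))$, etc., and the $x^jx^k$ corrections in $\sigma_{-2m}$ and $\sigma_{-2m-1}$ get integrated against $\xi$-monomials using the sphere integral formulas, producing the Ricci tensor. Assembling all pieces should yield exactly $2^{2m}\frac{2\pi^m}{\Gamma(m)}\int_M\big(-\tfrac16\mathbb{G}(u,w)+|V|^2 g(u,w)\big)\,d\mathrm{Vol}_M$; note that since $\hat c(V)$ anticommutes with every $c(e_\gamma)$, the vector field $V$ enters the spectral Einstein functional only through the scalar $|V|^2$ in $E$, which is why the answer is so clean.

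The main obstacle will be the bookkeeping in the second computation: one must correctly track the $x$-dependent (curvature) pieces of $\sigma_{-2m}$, $\sigma_{-2m-1}$ and the connection terms $\widetilde T_a={T}_{ab}x^b+O(x^2)$ hidden inside $\widetilde{D}_v$, because these combine with the $x$-derivatives in the composition formula and only their product with the right power of $\xi$ survives after the sphere integration — it is precisely the interplay of $\tfrac{m}{3}R_{ajbk}x^jx^k$, $\tfrac{2mi}{3}\mathrm{Ric}_{ab}x^b\xi_a$, the $R_{bats}(x_0)$ Clifford terms in $\sigma_{-2m-1}$, and the $T_{ab}$ corrections that generates the coefficient $-\tfrac16$ in front of $\mathbb{G}(u,w)=\mathrm{Ric}(u,w)-\tfrac12 s\,g(u,w)$. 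Careful use of the first Bianchi identity to symmetrize $R_{ajbk}$ and of the trace lemmas for products of four Clifford elements, together with the normalization $\mathrm{tr}(\mathrm{id})=2^{2m}$, will be needed to make the numerical constants match. The remaining verification — that the odd-parity Clifford terms (single $c(e_s)c(e_t)$ and single $\hat c(e_s)\hat c(e_t)$) trace to zero and hence drop out — is routine.
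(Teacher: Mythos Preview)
Your plan for the metric functional $\mathscr{M}_{\widetilde{D}_v}$ matches the paper exactly; the paper even calls it ``obvious'' and gives no further details.

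For the Einstein functional your route is correct in principle but organized differently from the paper. You simplify the anticommutator $\{c(w),\widetilde{D}_v\}$ to $-2\nabla_w^{\Lambda^*}+(\text{zeroth order})$ first and then face a composition with $\widetilde{D}_v^{-2m+1}=\widetilde{D}_v\cdot\widetilde{D}_v^{-2m}$, which forces you into a three-fold symbol product. The paper instead splits additively,
\[
\mathscr{S}_{\widetilde{D}_v}=\underbrace{\mathrm{Wres}\big(c(u)c(w)\,\widetilde{D}_v^{-2m+2}\big)}_{\mathscr{S}_1}
+\underbrace{\mathrm{Wres}\big(c(u)\widetilde{D}_v\,c(w)\widetilde{D}_v\cdot\widetilde{D}_v^{-2m}\big)}_{\mathscr{S}_2},
\]
and handles $\mathscr{S}_1$ by simply invoking Lemma~\ref{lem2} with $m$ replaced by $m-1$ (a zeroth-order multiplier against a known symbol), so $\mathscr{S}_1$ is essentially free. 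Only $\mathscr{S}_2$ then requires the full second-order composition, with $\mathcal{A}=c(u)\widetilde{D}_v$, $\mathcal{B}=c(w)\widetilde{D}_v$, and the six-term expansion of $\sigma_{-2m}(\mathcal{AB}\,\widetilde{D}_v^{-2m})$. Your anticommutator simplification buys conceptual clarity about why $V$ enters only through $|V|^2$, but the paper's split buys a genuine reduction in bookkeeping: one of the two halves becomes a direct read-off. Both approaches rely on the same toolbox (Lemma~\ref{lem2}, the sphere integrals $I_{S_n}^{\gamma_1\cdots}$, and the Clifford trace identities), and your identification of the vanishing traces for the odd-parity $c(e_s)c(e_t)$ and $\hat c(e_s)\hat c(e_t)$ pieces, as well as for $c(e_i)\hat c(\nabla_{e_i}V)$, is exactly what the paper verifies term by term.
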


\begin{proof}
The proof of the $\mathscr{M}_{\widetilde{D}_v}$ formula for the measure function is obvious. Before we prove that the spectral Einstein function $\mathscr{S}_{\widetilde{D}_v}$, splitting it into two parts as follows:
\begin{align}
\mathscr{S}_{\widetilde{D}_v}&=\mathrm{Wres}\bigg(c(u)c(w){\widetilde{D}_v^{-2m+2}}\bigg)+\mathrm{Wres}\bigg(c(u)\widetilde{D}_vc(w) \widetilde{D}_v\widetilde{D}_v^{-2m}\bigg)\nonumber\\
&:=\mathscr{S}_{1}+\mathscr{S}_{2}.\nonumber
\end{align}

{\bf Part I)} $\mathscr{S}_{1}=\mathrm{Wres}\bigg(c(u)c(w){\widetilde{D}_v^{-2m+2}}\bigg)$.

By \eqref{wers}, we need to compute  $\int_{S^{*} M} \operatorname{tr}\left[\sigma_{-2 m}\left(c(u)c(w) \widetilde{D}_v^{-2 m+2}\right)\right](x, \xi) $. Based on the algorithm yielding the principal symbol of a product of pseudo-differential operators in terms of the principal symbols of the factors,  by lemma \ref{lem2}, we have
\begin{align}
	\sigma_{-2m}(\widetilde{D}_v^{-2 m+2})=& \frac{m(m-1)}{3}\|\xi\|^{-2 m-4}\sum_{a,b=1}^{2m} \operatorname{Ric}_{a b} \xi_{a} \xi_{b} \\
		&-\frac{ m (m-1)}{4}\|\xi\|^{-2 m-4} \sum_{a,b,t,s=1}^{2m}\operatorname{R}_{b a t s}(x_0) c(e_s) c(e_t) \xi_{a} \xi_{b}\nonumber\\
&+\frac{ m (m-1)}{4}\|\xi\|^{-2 m-4} \sum_{a,b,t,s=1}^{2m}\operatorname{R}_{b a t s}(x_0) \hat{c}(e_s) \hat{c}(e_t) \xi_{a} \xi_{b}\nonumber\\
		&-\frac{1}{8}(m-1)\|\xi\|^{-2 m-2}\sum_{i,j,k,l=1}^{2m}R_{ijkl}\hat{c}(e_i) \hat{c}(e_j)c(e_k) c(e_l)-
\frac{1}{4}(m-1) \|\xi\|^{-2 m-4}s\nonumber\\
		&-(m-1)\|\xi\|^{-2 m-2}\sum_{i=1}^{2m}c(e_i)\hat{c}(\nabla^{TM}_{e_i}V)-(m-1)\|\xi\|^{-2 m-2}|V|^2+O\left(\mathbf{x}\right).
\end{align}
Obviously, the general dimensional symbols of the $c(u)c(w)\widetilde{D}_v^{-2 m+2}$ are given:
\begin{align}\label{PD}
	\sigma_{-2 m}\left(c(u)c(w)\widetilde{D}_v^{-2 m+2}\right) =& \frac{m(m-1)}{3}\|\xi\|^{-2 m-4}\sum_{a,b=1}^{2m} \operatorname{Ric}_{a b} \xi_{a} \xi_{b}c(u)c(w) \\
		&-\frac{ m (m-1)}{4}\|\xi\|^{-2 m-4} \sum_{a,b,t,s=1}^{2m}\operatorname{R}_{b a t s}(x_0) c(u)c(w)c(e_s) c(e_t) \xi_{a} \xi_{b}\nonumber\\
&+\frac{ m (m-1)}{4}\|\xi\|^{-2 m-4} \sum_{a,b,t,s=1}^{2m}\operatorname{R}_{b a t s}(x_0) c(u)c(w)\hat{c}(e_s) \hat{c}(e_t) \xi_{a} \xi_{b}\nonumber\\
		&-\frac{1}{8}(m-1)\|\xi\|^{-2 m-2}\sum_{i,j,k,l=1}^{2m}R_{ijkl}c(u)c(w)\hat{c}(e_i) \hat{c}(e_j)c(e_k) c(e_l)\nonumber\\
		&-
\frac{1}{4}(m-1) \|\xi\|^{-2 m-4}sc(u)c(w)-(m-1)\|\xi\|^{-2 m-2}\sum_{i=1}^{2m}c(u)c(w)c(e_i)\hat{c}(\nabla^{TM}_{e_i}V)\nonumber\\
		&-(m-1)\|\xi\|^{-2 m-2}|V|^2c(u)c(w)+O\left(\mathbf{x}\right).\nonumber
\end{align}
Next, we integrate each of the above items respectively.
\begin{align}
	&{\bf(I-1)}~\int_{\|\xi\|=1}\operatorname{tr} \biggl\{\frac{m(m-1)}{3}\|\xi\|^{-2 m-2}\sum_{a,b=1}^{2m} \operatorname{Ric}_{a b} \xi_{a} \xi_{b}c(u)c(w)\biggr\}(x_0)\sigma(\xi)\nonumber\\
	&\;\;\;\;\;\;\;\;\;\;=-\frac{m-1}{6}s g(u,w){\rm tr}[id]{\rm Vol}(S^{n-1});\nonumber\\
	&{\bf(I-2)}\int_{\|\xi\|=1} {\rm tr}\biggl\{-\frac{ m (m-1)}{4}\|\xi\|^{-2 m-2} \sum_{a,b,t,s=1}^{2m}\operatorname{R}_{b a t s}c(u)c(w) c(e_s) c(e_t) \xi_{a} \xi_{b}\biggr\}(x_0)\sigma(\xi)\nonumber\\
	&\;\;\;\;\;\;\;\;\;\;=-\frac{ m-1}{8}\sum_{a,t,s=1}^{2m}\operatorname{R}_{a a t s}{\rm tr}\bigg(c(u)c(w) c(e_s) c(e_t)\bigg)(x_0){\rm Vol}(S^{n-1})  \nonumber\\
	&\;\;\;\;\;\;\;\;\;\;=0;\nonumber\\
	&{\bf(I-3)}\int_{\|\xi\|=1} {\rm tr}\biggl\{-\frac{ m (m-1)}{4}\|\xi\|^{-2 m-2} \sum_{a,b,t,s=1}^{2m}\operatorname{R}_{b a t s}c(u)c(w) \hat{c}(e_s) \hat{c}(e_t) \xi_{a} \xi_{b}\biggr\}(x_0)\sigma(\xi)\nonumber\\
	&\;\;\;\;\;\;\;\;\;\;=0;\nonumber\\
	&{\bf(I-4)}\int_{\|\xi\|=1} {\rm tr}\biggl\{-\frac{1}{8}(m-1)\|\xi\|^{-2 m-2}\sum_{i,j,k,l=1}^{2m}R_{ijkl}c(u)c(w)\hat{c}(e_i) \hat{c}(e_j)c(e_k) c(e_l)\biggr\}(x_0)\sigma(\xi)\nonumber\\
	&\;\;\;\;\;\;\;\;\;\;=-\frac{1}{8}(m-1)\sum_{ijkl}R_{ijkl}(x_0){\rm tr}[c(u)c(w)\hat{c}(e_i) \hat{c}(e_j)c(e_k) c(e_l)](x_0){\rm Vol}(S^{n-1})\nonumber\\
&\;\;\;\;\;\;\;\;\;\;=0;\nonumber\\
	&{\bf(I-5)}\int_{\|\xi\|=1} {\rm tr}\biggl\{-\frac{1}{4}(m-1) \|\xi\|^{-2 m-4}sc(u)c(w)\biggr\}(x_0)\sigma(\xi)\nonumber\\
	&\;\;\;\;\;\;\;\;\;\;=-\frac{1}{4}(m-1)s{\rm tr}[c(u)c(w)]{\rm Vol}(S^{n-1})\nonumber\\
&\;\;\;\;\;\;\;\;\;\;=\frac{1}{4}(m-1)sg(u,w){\rm tr}[id]{\rm Vol}(S^{n-1});\nonumber\\
	&{\bf(I-6)}\int_{\|\xi\|=1} {\rm tr}\biggl\{-(m-1)\|\xi\|^{-2 m-2}\sum_{i=1}^{2m}c(u)c(w)c(e_i)\hat{c}(\nabla^{TM}_{e_i}V)\biggr\}(x_0)\sigma(\xi)\nonumber\\
	&\;\;\;\;\;\;\;\;\;\;=-(m-1)\sum_{i=1}^{2m}{\rm tr}[c(u)c(w)c(e_i)\hat{c}(\nabla^{TM}_{e_i}V)]{\rm Vol}(S^{n-1})\nonumber\\
&\;\;\;\;\;\;\;\;\;\;=0;\nonumber\\
	&{\bf(I-7)}\int_{\|\xi\|=1} {\rm tr}\biggl\{-(m-1)\|\xi\|^{-2 m-2}|V|^2c(u)c(w)\biggr\}(x_0)\sigma(\xi)\nonumber\\
	&\;\;\;\;\;\;\;\;\;\;=-(m-1)|V|^2{\rm tr}[c(u)c(w)]{\rm Vol}(S^{n-1})\nonumber\\
&\;\;\;\;\;\;\;\;\;\;=(m-1)|V|^2g(u,w){\rm tr}[id]{\rm Vol}(S^{n-1}).
\end{align}

Therefore, we get
\begin{align}\label{zpdt}
	&\int_{\|\xi\|=1} {\rm tr}\biggl\{\sigma_{-2 m}\left(c(u)c(w)  \widetilde{D}_v^{-2 m+2}\right)\biggr\}(x_0)\sigma(\xi)\\
	&=\left(\frac{m-1}{12}s g(u,w)+(m-1)|V|^2 g(u,w)\right){\rm tr}[id]{\rm Vol}(S^{n-1}).\nonumber
\end{align}
Since ${\rm tr}[id]=2^{2m}$ and ${\rm Vol}(S^{n-1})=\frac{2 \pi^{m}}{\Gamma\left(m\right)}$, we obtain
\begin{align}\label{z1}
	\mathscr{S}_{1}=&\mathrm{Wres}\bigg(c(u)c(w){\widetilde{D}^{-2 m+2}_v}\bigg)\\
	=&\;2^{2m} \frac{2 \pi^{m}}{\Gamma\left(m\right)}\int_{M}\left(\frac{m-1}{12}s g(u,w)+(m-1)|V|^2 g(u,w)\right)d{\rm Vol}_M\nonumber.
\end{align}

{\bf Part II)} $\mathscr{S}_{2}=\mathrm{Wres}\bigg(c(u)\widetilde{D}_vc(w) \widetilde{D}_v \widetilde{D}_v^{-2m}\bigg)$.

 Write $c(u)\widetilde{D}_v:=\mathcal{A},c(w)\widetilde{D}_v:=\mathcal{B} .$
By \eqref{wers}, we need to compute  $\int_{S^{*} M} \operatorname{tr}\left[\sigma_{-2 m}\left(\mathcal{A} \mathcal{B} \widetilde{D}_v
^{-2 m}\right)\right](x, \xi) $. Based on the algorithm yielding the principal symbol of a product of pseudo-differential operators in terms of the principal symbols of the factors, we have
\begin{align}\label{ABD}
	\sigma_{-2 m}\left(\mathcal{A} \mathcal{B} \widetilde{D}_v^{-2 m}\right) & =\left\{\sum_{|\alpha|=0}^{\infty} \frac{(-i)^{|\alpha|}}{\alpha!} \partial_{\xi}^{\alpha}[\sigma(\mathcal{A} \mathcal{B})] \partial_{x}^{\alpha}\left[\sigma\left(\widetilde{D}_v^{-2 m}\right)\right]\right\}_{-2 m} \\
	& =\sigma_{0}(\mathcal{A} \mathcal{B}) \sigma_{-2 m}\left(\widetilde{D}_v^{-2 m}\right)+\sigma_{1}(\mathcal{A} \mathcal{B}) \sigma_{-2 m-1}\left(\widetilde{D}_v^{-2 m}\right)+\sigma_{2}(\mathcal{A} \mathcal{B}) \sigma_{-2 m-2}\left(\widetilde{D}_v^{-2 m}\right) \nonumber\\
	& +(-i) \sum_{j=1}^{2m} \partial_{\xi_{j}}\left[\sigma_{2}(\mathcal{A} \mathcal{B})\right] \partial_{x_{j}}\left[\sigma_{-2 m-1}\left(\widetilde{D}_v^{-2 m}\right)\right]+(-i) \sum_{j=1}^{2m} \partial_{\xi_{j}}\left[\sigma_{1}(\mathcal{A} \mathcal{B})\right] \partial_{x_{j}}\left[\sigma_{-2 m}\left(\widetilde{D}_v^{-2 m}\right)\right] \nonumber\\
	& -\frac{1}{2} \sum_{j ,l=1}^{2m} \partial_{\xi_{j}} \partial_{\xi_{l}}\left[\sigma_{2}(\mathcal{A} \mathcal{B})\right] \partial_{x_{j}} \partial_{x_{l}}\left[\sigma_{-2 m}\left(\widetilde{D}_v^{-2 m}\right)\right] .\nonumber
\end{align}

 \begin{lem}
The symbols of  $\mathcal{A}$  and  $\mathcal{B}$  are given:
\begin{align}
&\sigma_{1}(\mathcal{A})=\sqrt{-1}c(u)c(\xi); \nonumber\\
	&\sigma_{1}(\mathcal{B})=\sqrt{-1}c(w)c(\xi);\nonumber\\
&\sigma_{0}(\mathcal{A})=-\frac{1}{4} \sum_{p,s,t=1}^{2m} w_{st}(e_p)c(u)c(e_p)c(e_s)c(e_t)+\frac{1}{4} \sum_{p,s,t=1}^{2m} w_{st}(e_p)c(u)c(e_p)\hat{c}(e_s)\hat{c}(e_t)+c(u)\hat{c}(V);\nonumber\\
	&\sigma_{0}(\mathcal{B})=-\frac{1}{4} \sum_{p,s,t=1}^{2m} w_{st}(e_p)c(w)c(e_p)c(e_s)c(e_t)+\frac{1}{4} \sum_{p,s,t=1}^{2m} w_{st}(e_p)c(w)c(e_p)\hat{c}(e_s)\hat{c}(e_t)+c(w)\hat{c}(V).
\end{align}
 \end{lem}

Further, by the composition formula of pseudo-differential operators, we get the following lemma.

 \begin{lem}
	The symbols of $\mathcal{AB}$ are given:
	\begin{align}
		\sigma_{0}(\mathcal{AB})=&\sigma_{0}(\mathcal{A}) \sigma_{0}(\mathcal{B})+(-i) \partial_{\xi_{j}}\left[\sigma_{1}(\mathcal{A})\right] \partial_{x_{j}}\left[\sigma_{0}(\mathcal{B})\right]+(-i) \partial_{\xi_{j}}\left[\sigma_{0}(\mathcal{A})\right] \partial_{x_{j}}\left[\sigma_{1}(\mathcal{B})\right]\nonumber\\
		=&\;\;\;\frac{1}{16}\sum_{p,s,t,\hat{p},\hat{s},\hat{t}=1}^{2m} w_{st}(e_p) w_{\hat{s} \hat{t}}(e_{\hat{p}})c(u)c(e_p)c(e_s)c(e_t)c(w)c(e_{\hat{p}})c(e_{\hat{s}})c(e_{\hat{t}})\nonumber\\
		&-\frac{1}{16}\sum_{p,s,t,\hat{p},\hat{s},\hat{t}=1}^{2m} w_{st}(e_p) w_{\hat{s} \hat{t}}(e_{\hat{p}})c(u)c(e_p)c(e_s)c(e_t)c(w)c(e_{\hat{p}})\hat{c}(e_{\hat{s}})\hat{c}(e_{\hat{t}})\nonumber\\
		&-\frac{1}{16}\sum_{p,s,t,\hat{p},\hat{s},\hat{t}=1}^{2m} w_{st}(e_p) w_{\hat{s} \hat{t}}(e_{\hat{p}})c(u)c(e_{\hat{p}})\hat{c}(e_{\hat{s}})\hat{c}(e_{\hat{t}})c(w)c(e_p)c(e_s)c(e_t)\nonumber\\
		&+\frac{1}{16}\sum_{p,s,t,\hat{p},\hat{s},\hat{t}=1}^{2m} w_{st}(e_p) w_{\hat{s} \hat{t}}(e_{\hat{p}})c(u)c(e_p)\hat{c}(e_s)\hat{c}(e_t)c(w)c(e_{\hat{p}})\hat{c}(e_{\hat{s}})\hat{c}(e_{\hat{t}})\nonumber\\
&-\frac{1}{4}\sum_{p,s,t=1}^{2m} w_{st}(e_p) c(u)c(e_p)c(e_s)c(e_t)c(w)\hat{c}(V)+\frac{1}{4}\sum_{p,s,t=1}^{2m} w_{st}(e_p) c(u)c(e_p)\hat{c}(e_s)\hat{c}(e_t)c(w)\hat{c}(V)\nonumber\\
&-\frac{1}{4}\sum_{\hat{p},\hat{s},\hat{t}=1}^{2m} w_{\hat{s}\hat{t}}(e_{\hat{p}}) c(u)\hat{c}(V)c(w)c(e_{\hat{p}})c(e_{\hat{s}})c(e_{\hat{t}})+\frac{1}{4}\sum_{\hat{p},\hat{s},\hat{t}=1}^{2m} w_{\hat{s}\hat{t}}(e_{\hat{p}}) c(u)\hat{c}(V)c(w)c(e_{\hat{p}})\hat{c}(e_{\hat{s}})\hat{c}(e_{\hat{t}})\nonumber\\
&+\frac{1}{8}\sum_{j,p,s,t=1}^{2m} {\operatorname{R}}_{jpts}c(u)c(dx_j)c(w)c(e_p)c(e_s)c(e_t)-\frac{1}{8}\sum_{j,p,s,t=1}^{2m} {\operatorname{R}}_{jpts}c(u)c(dx_j)c(w)c(e_p)\hat{c}(e_s)\hat{c}(e_t)\nonumber\\
		&-\frac{1}{4}\sum_{p,s,t,j,\gamma=1}^{2m}w_{st}(e_p) \partial x_j(w_\gamma)c(u)c(dx_j)c(e_\gamma)c(e_p)c(e_s)c(e_t) \nonumber\\
&+\frac{1}{4}\sum_{p,s,t,j,\gamma=1}^{2m}w_{st}(e_p)\partial x_j(w_\gamma)c(u)c(dx_j)c(e_\gamma)c(e_p)\hat{c}(e_s)\hat{c}(e_t) \nonumber\\
		&+\sum_{p,s,t,j,\gamma=1}^{2m}\partial x_j(w_\gamma)c(u)c(dx_j)c(w)\hat{c}(V)+\sum_{p,s,t,j,\beta=1}^{2m}\partial x_j(V_\beta)c(u)c(dx_j)c(e_\gamma)\hat{c}(e_\beta)\nonumber\\
&+c(u)\hat{c}(V)c(w)\hat{c}(V);\nonumber\\
		\sigma_{1}(\mathcal{AB})=&\sigma_{1}(\mathcal{A}) \sigma_{0}(\mathcal{B})+\sigma_{0}(\mathcal{A}) \sigma_{1}(\mathcal{B})+(-i) \partial_{\xi_{j}}\left[\sigma_{1}(\mathcal{A})\right] \partial_{x_{j}}\left[\sigma_{1}(\mathcal{B})\right] \nonumber\\
		=&-\frac{i}{4}\sum_{p,s,t=1}^{2m} w_{st}(e_p) c(u)c(\xi)c(w)c(e_p)c(e_s)c(e_t)+\frac{i}{4}\sum_{p,s,t=1}^{2m} w_{st}(e_p) c(u)c(\xi)c(w)c(e_p)\hat{c}(e_s)\hat{c}(e_t)\nonumber\\
		&-\frac{i}{4}\sum_{p,s,t=1}^{2m} w_{st}(e_p) c(u)c(e_p)c(e_s)c(e_t)c(w)c(\xi)\nonumber+\frac{i}{4}\sum_{p,s,t=1}^{2m} w_{st}(e_p) c(u)c(e_p)\hat{c}(e_s)\hat{c}(e_t)c(w)c(\xi)\nonumber\\
		&+ic(u)c(\xi)c(w)\hat{c}(V)+ic(u)\hat{c}(V)c(w)c(\xi)+i\sum_{j,\gamma=1}^{2m}\partial x_j(w_\gamma)c(v)c(dx_j)c(e_\gamma)c(\xi);\nonumber\\
		\sigma_{2}(\mathcal{AB})=&\sigma_{1}(\mathcal{A})\sigma_{1}(\mathcal{B})=-c(u)c(\xi)c(w)c(\xi).\nonumber
	\end{align}
\end{lem}

\noindent {\bf (II-1)} For $\sigma_{0}(\mathcal{AB}) \sigma_{-2 m}\left(\widetilde{D}_v^{-2 m}\right)(x_{0})$:
\begin{align}\label{0-2m}
&\sigma_{0}(\mathcal{AB}) \sigma_{-2 m}\left(\widetilde{D}_v^{-2 m}\right)\left(x_{0}\right)\\
&=\frac{\|\xi\|^{-2 m}}{8}\sum_{j,p,s,t=1}^{2m}  {\operatorname{R}}_{jpts}c(u)c(dx_j)c(w)c(e_p)c(e_s)c(e_t)\nonumber\\
&-\frac{\|\xi\|^{-2 m}}{8}\sum_{j,p,s,t=1}^{2m}  {\operatorname{R}}_{jpts}c(u)c(dx_j)c(w)c(e_p)\hat{c}(e_s)\hat{c}(e_t)\nonumber\\
&+\|\xi\|^{-2 m}\sum_{j,\gamma=1}^{2m} \partial x_j(w_\gamma)c(u)c(dx_j)c(e_\gamma)\hat{c}(V)\nonumber\\
&+\|\xi\|^{-2 m}\sum_{j,\gamma=1}^{2m} \partial x_j(V_\beta)c(u)c(dx_j)c(w)\hat{c}(e_\beta)\nonumber\\
&+\|\xi\|^{-2 m}c(u)\hat{c}(V)c(w)\hat{c}(V).\nonumber
\end{align}

 \noindent{\bf (II-1-$\mathbb{A}$)}

\begin{align}\label{t2}
	&{\rm tr}\bigg(\sum_{j,p,t,s=1}^{2m}c(u)c(e_{j})c(w)c(e_p)c(e_s)c(e_t)\bigg)\\
	&=\sum_{j,p,t,s,r,f=1}^{2m}u_rw_f\bigg[-\delta_r^j\delta_f^p\delta_s^t+\delta_r^j\delta_f^s\delta_p^t-\delta_r^j\delta_f^t\delta_p^s+\delta_r^f\delta_p^s\delta_j^t-\delta_r^f\delta_p^t\delta_j^s+\delta_r^f\delta_p^j\delta_s^t-\delta_r^p\delta_j^f\delta_s^t\nonumber\\
&+\delta_r^p\delta_j^s\delta_f^t-\delta_r^p\delta_f^s\delta_j^t+\delta_r^s\delta_j^f\delta_p^t-\delta_r^s\delta_j^p\delta_f^t+\delta_r^s\delta_j^t\delta_f^p-\delta_r^t\delta_j^p\delta_f^s-\delta_r^t\delta_j^s\delta_f^p\bigg]{\rm tr}[id],\nonumber
\end{align}
then
\begin{align}
	&\int_{\|\xi\|=1}\operatorname{tr}\biggl\{ \frac{\|\xi\|^2}{8}\sum_{j,p,s,t=1}^{2m}  {\operatorname{R}}_{jpts}c(v)c(dx_j)c(w)c(e_p)c(e_s)c(e_t) \biggr\}(x_0)\sigma(\xi)\\
&=\bigg(\frac{1}{4}g(u,w)\sum_{ijp}R(e_j,e_p,e_j,e_p)-\frac{1}{2}\sum_pR(u,e_p,w,e_p)\bigg){\rm tr}[id]{\rm Vol}(S^{n-1})\nonumber\\
	&=\bigg(\frac{1}{4} s g(u,w)-\frac{1}{2}{\rm Ric}(u,w)\bigg){\rm tr}[id]{\rm Vol}(S^{n-1}).\nonumber
\end{align}

\noindent{\bf (II-1-$\mathbb{B}$)}

\begin{align}\label{qt3}
	&{\rm tr}\bigg(\sum_{j,p,t,s=1}^{2m}c(u)c(e_{j})c(w)c(e_p)\hat{c}(e_s)\hat{c}(e_t)\bigg)=\sum_{j,p,t,s,r,f=1}^{2m}u_rw_f\bigg[\delta_r^j\delta_f^p\delta_s^t-\delta_r^f\delta_j^p\delta_s^t+\delta_r^p\delta_j^f\delta_s^t\bigg]{\rm tr}[id],\nonumber
\end{align}
then
\begin{align}
	&\int_{\|\xi\|=1}\operatorname{tr}\biggl\{ -\frac{\|\xi\|^2}{8}\sum_{j,p,s,t=1}^{2m}  {\operatorname{R}}_{jpts}c(v)c(dx_j)c(w)c(e_p)\hat{c}(e_s)\hat{c}(e_t) \biggr\}(x_0)\sigma(\xi)\\
	&=0.\nonumber
\end{align}

\noindent{\bf (II-1-$\mathbb{C}$)}
\begin{align}
	&{\rm tr}\bigg(\sum_{j,\gamma=1}^{2m} \partial x_j(w_\gamma)c(u)c(dx_j)c(e_\gamma)\hat{c}(V)\bigg)=0,\nonumber
\end{align}
then
\begin{align}
&\int_{\|\xi\|=1}\|\xi\|^{-2 m}\sum_{j,\gamma=1}^{2m} \operatorname{tr}[\partial x_j(w_\gamma)c(u)c(dx_j)c(e_\gamma)\hat{c}(V)] \sigma(\xi)=0.\nonumber
\end{align}

\noindent{\bf (II-1-$\mathbb{D}$)}

Similar to {\bf (II-1-$\mathbb{C}$)}
\begin{align}
&\int_{\|\xi\|=1}\|\xi\|^{-2 m}\sum_{j,\gamma=1}^{2m} \operatorname{tr}[\partial x_j(V_\beta)c(u)c(dx_j)c(w)\hat{c}(e_\beta)] \sigma(\xi)=0.\nonumber
\end{align}

\noindent{\bf (II-1-$\mathbb{E}$)}

\begin{align}
	&{\rm tr}\bigg(c(u)\hat{c}(V)c(w)\hat{c}(V)\bigg)=|V|^2g(u,w){\rm tr}[id],\nonumber
\end{align}
then
\begin{align}
&\int_{\|\xi\|=1}\|\xi\|^{-2 m}c(u)\hat{c}(V)c(w)\hat{c}(V)=|V|^2g(u,w){\rm tr}[id]{\rm Vol}(S^{n-1}).\nonumber
\end{align}

Therefore, we get
\begin{align}
	&\int_{\|\xi\|=1} \operatorname{tr}\bigg[\sigma_{0}(\mathcal{AB}) \sigma_{-2 m}\left(\widetilde{D}_v^{-2 m}\right)\left(x_{0}\right)\bigg] \sigma(\xi)\\
	&=\bigg(\frac{1}{4} s g(u,w)-\frac{1}{2}{\rm Ric}(u,w)+|V|^2g(u,w)\bigg){\rm tr}[id]{\rm Vol}(S^{n-1}).\nonumber
\end{align}

\noindent{\bf (II-2)} For $\sigma_{1}(\mathcal{AB}) \sigma_{-2 m-1}\left(\widetilde{D}_v^{-2 m}\right)\left(x_{0}\right)$:
\begin{align}
	&\sigma_{1}(\mathcal{AB}) \sigma_{-2 m-1}\left(\widetilde{D}_v^{-2 m}\right)\left(x_{0}\right)=0.\nonumber
\end{align}

Therefore, we get
\begin{align}
	&\int_{\|\xi\|=1} \operatorname{tr}\bigg[\sigma_{1}(\mathcal{AB}) \sigma_{-2 m-1}\left(\widetilde{D}_v^{-2 m}\right)\left(x_{0}\right)\bigg] \sigma(\xi)=0.\nonumber
\end{align}

\noindent{\bf (II-3)} For $\sigma_{2}(\mathcal{AB}) \sigma_{-2 m-2}\left(\widetilde{D}_v^{-2 m}\right)$:
\begin{align}\label{2-2m-2}
	&\sigma_{2}(\mathcal{AB}) \sigma_{-2 m-2}\left(\widetilde{D}_v^{-2 m}\right)\left(x_{0}\right)\\
	&=-\frac{m(m+1)}{3}\|\xi\|^{-2 m-4}\sum_{a,b=1}^{2m} \operatorname{Ric}_{a b} \xi_{a} \xi_{b}c(u)c(\xi)c(w)c(\xi) \nonumber\\
	&\;\;\;\;+\frac{ m (m+1)}{4}\|\xi\|^{-2 m-4} \sum_{a,b,t,s=1}^{2m}\operatorname{R}_{b a t s} c(u)c(\xi)c(w)c(\xi)c(e_s) c(e_t) \xi_{a} \xi_{b}\nonumber\\
&\;\;\;\;-\frac{ m (m+1)}{4}\|\xi\|^{-2 m-4} \sum_{a,b,t,s=1}^{2m}\operatorname{R}_{b a t s} c(u)c(\xi)c(w)c(\xi)\hat{c}(e_s) \hat{c}(e_t) \xi_{a} \xi_{b}\nonumber\\
&\;\;\;\;+\frac{ m }{8}\|\xi\|^{-2 m-2}\sum_{i,j,k,l=1}^{2m}c(u)c(\xi)c(w)c(\xi)\hat{c}(e_i) \hat{c}(e_j)c(e_k)c(e_l)\nonumber\\
	&\;\;\;\;+\frac{m}{4}\|\xi\|^{-2 m-2}c(u)c(\xi)c(w)c(\xi)s\nonumber\\
	&\;\;\;\;+m\|\xi\|^{-2 m-2}\sum_{i=1}^{2m}c(u)c(\xi)c(w)c(\xi)c(e_i)\hat{c}(\nabla^{TM}_{e_i}V)\nonumber\\
&\;\;\;\;+m\|\xi\|^{-2 m-2}|V|^2c(u)c(\xi)c(w)c(\xi).\nonumber
\end{align}

\noindent{\bf (II-3-$\mathbb{A}$)}
\begin{align}\label{t7}
	{\rm tr}[c(u)c(\xi)c(w)c(\xi)]=\sum_{f,g=1}^{2m}\xi_f\xi_g{\rm tr}[c(u)c(e_f)c(w)c(e_g)],
\end{align}
then
\begin{align}
	&\int_{\|\xi\|=1} \operatorname{tr}\biggl\{-\frac{m(m+1)}{3}\|\xi\|^{-2 m-4}\sum_{a,b=1}^{2m} \operatorname{Ric}_{a b} \xi_{a} \xi_{b} c(u)c(\xi)c(w)c(\xi)\biggr\}\sigma(\xi)\nonumber\\
	&=-\frac{m(m+1)}{3}\times\frac{1}{2m(2m+2)}\operatorname{Ric}_{a b}(\delta_b^a\delta_f^g+\delta_a^f\delta_b^g+\delta_a^g\delta_b^f){\rm tr}[c(u)c(e_f)c(w)c(e_g)]{\rm Vol}(S^{n-1})\nonumber\\
&=-\frac{1}{12}\sum_{laf}R(e_l,e_a,e_l,e_a){\rm tr}[c(u)c(e_f)c(w)c(e_f)]{\rm Vol}(S^{n-1})\nonumber\\
&-\frac{1}{12}\sum_{lab}R(e_l,e_a,e_l,e_b){\rm tr}[c(u)c(e_a)c(w)c(e_b)]{\rm Vol}(S^{n-1})\nonumber\\
&-\frac{1}{12}\sum_{laf}R(e_l,e_a,e_l,e_b){\rm tr}[c(u)c(e_b)c(w)c(e_a)]{\rm Vol}(S^{n-1})\nonumber\\
	&=\;\;\bigg(\frac{m}{6} s g(u,w)-\frac{1}{3}{\rm Ric}(u,w)\bigg){\rm tr}[id]{\rm Vol}(S^{n-1}).
\end{align}

\noindent{\bf (II-3-$\mathbb{B}$)}

\begin{align}
	&\int_{\|\xi\|=1}\operatorname{tr} \biggl\{\frac{ m (m+1)}{4}\|\xi\|^{-2 m-4} \sum_{a,b,t,s=1}^{2m}\operatorname{R}_{b a t s} c(u)c(\xi)c(w)c(\xi)c(e_s) c(e_t) \xi_{a} \xi_{b}\biggr\}\sigma(\xi)\nonumber\\
&=\frac{ m (m+1)}{4}\times\frac{1}{2m(2m+2)}\sum_{a,b,f,g,t,s=1}^{2m}\operatorname{R}_{b a t s} (\delta_b^a\delta_f^g+\delta_a^f\delta_b^g+\delta_a^g\delta_b^f) {\rm tr}[c(u)c(e_a)c(w)c(e_b)c(e_s) c(e_t)]{\rm Vol}(S^{n-1})\nonumber\\
&=\frac{1}{16}\sum_{a,b,t,s=1}^{2m}\operatorname{R}_{b a t s} {\rm tr}[c(u)c(e_a)c(w)c(e_b)c(e_s) c(e_t)+c(u)c(e_b)c(w)c(e_a)c(e_s) c(e_t)]{\rm Vol}(S^{n-1}) \nonumber\\
	&=0.\nonumber
\end{align}

\noindent{\bf (II-3-$\mathbb{C}$)}

\begin{align}
	&\int_{\|\xi\|=1}\operatorname{tr} \biggl\{\frac{ m (m+1)}{4}\|\xi\|^{-2 m-4} \sum_{a,b,t,s=1}^{2m}\operatorname{R}_{b a t s} c(u)c(\xi)c(w)c(\xi)\hat{c}(e_s) \hat{c}(e_t) \xi_{a} \xi_{b}\biggr\}\sigma(\xi)\nonumber\\
&=\frac{ m (m+1)}{4}\times\frac{1}{2m(2m+2)}\sum_{a,b,f,g,t,s=1}^{2m}\operatorname{R}_{b a t s} (\delta_b^a\delta_f^g+\delta_a^f\delta_b^g+\delta_a^g\delta_b^f) {\rm tr}[c(u)c(e_a)c(w)c(e_b)\hat{c}(e_s) \hat{c}(e_t)] {\rm Vol}(S^{n-1})\nonumber\\
&=\frac{1}{16}\sum_{a,b,t,s=1}^{2m}\operatorname{R}_{b a t s} {\rm tr}[c(u)c(e_a)c(w)c(e_b)\hat{c}(e_s) \hat{c}(e_t)+c(u)c(e_b)c(w)c(e_a)\hat{c}(e_s) \hat{c}(e_t)] {\rm Vol}(S^{n-1}) \nonumber\\
	&=0.\nonumber
\end{align}

\noindent{\bf (II-3-$\mathbb{D}$)}

\begin{align}
	&\int_{\|\xi\|=1}\operatorname{tr} \biggl\{\frac{ m }{8}\|\xi\|^{-2 m-2}\sum_{i,j,k,l=1}^{2m}c(u)c(\xi)c(w)c(\xi)\hat{c}(e_i) \hat{c}(e_j)c(e_k)c(e_l)\biggr\}\sigma(\xi)\nonumber\\
	&=\int_{\|\xi\|=1}\operatorname{tr} \biggl\{\frac{ m }{8}\|\xi\|^{-2 m-2}\sum_{i,j,k,l=1}^{2m}c(u)c(e_f)c(w)c(e_g)\hat{c}(e_i) \hat{c}(e_j)c(e_k)c(e_l)\xi_{f} \xi_{g}\biggr\}\sigma(\xi)\nonumber\\
	&=\;\;\frac{ 1}{16}\sum_{i,j,k,l=1}^{2m}\operatorname{R}_{ijkl} {\rm tr}[c(u)c(e_f)c(w)c(e_g)\hat{c}(e_i) \hat{c}(e_j)c(e_k)c(e_l)]{\rm Vol}(S^{n-1})\nonumber\\
	&=\;\;0.\nonumber
\end{align}

\noindent{\bf (II-3-$\mathbb{E}$)}

\begin{align}
	&\int_{\|\xi\|=1}\operatorname{tr} \biggl\{\frac{m}{4}\|\xi\|^{-2 m-2}c(u)c(\xi)c(w)c(\xi)s\biggr\}\sigma(\xi)\nonumber\\
	&=\int_{\|\xi\|=1}\operatorname{tr} \biggl\{\frac{m}{4}\|\xi\|^{-2 m-2}c(u)c(e_f)c(w)c(e_g)s\xi_{f} \xi_{g}\biggr\}\sigma(\xi)\nonumber\\
	&=\;\;\frac{ 1}{8}s{\rm tr}[c(u)c(e_f)c(w)c(e_f)]{\rm Vol}(S^{n-1}) \nonumber\\
	&=\;\;-\frac{1}{4}(m-1)g(u,w){\rm tr}[id]{\rm Vol}(S^{n-1}).
\end{align}

\noindent{\bf (II-3-$\mathbb{F}$)}

\begin{align}
&\int_{\|\xi\|=1}\operatorname{tr} \biggl\{m\|\xi\|^{-2 m-2}\sum_{i=1}^{2m}c(u)c(\xi)c(w)c(\xi)c(e_i)\hat{c}(\nabla^{TM}_{e_i}V)\biggr\}\sigma(\xi)\nonumber\\
&=\int_{\|\xi\|=1}\operatorname{tr} \biggl\{m\|\xi\|^{-2 m-2}\sum_{i,f,g=1}^{2m}c(u)c(e_f)c(w)c(e_g)c(e_i)\hat{c}(\nabla^{TM}_{e_i}V)\xi_{f}\xi_g\biggr\}\sigma(\xi)\nonumber\\
&=\;\;\frac{1}{2}\sum_{i,f=1}^{2m}{\rm tr}\bigg(c(u)c(e_f)c(w)c(e_f)c(e_i)\hat{c}(\nabla^{TM}_{e_i}V)\bigg){\rm Vol}(S^{n-1})\nonumber\\
&=0.\nonumber
\end{align}

\noindent{\bf (II-3-$\mathbb{G}$)}

\begin{align}
	&\int_{\|\xi\|=1}\operatorname{tr} \biggl\{m\|\xi\|^{-2 m-2}|V|^2c(u)c(\xi)c(w)c(\xi)\biggr\}\sigma(\xi)\nonumber\\
	&=\int_{\|\xi\|=1} \operatorname{tr}\biggl\{m\|\xi\|^{-2 m-2}|V|^2c(u)c(e_f)c(w)c(e_g)\xi_{f}\xi_{g}\biggr\}\sigma(\xi)\nonumber\\
	&=\;\;\frac{1}{2}|V|^2{\rm tr}[c(u)c(e_f)c(w)c(e_f)]{\rm Vol}(S^{n-1})\nonumber\\
	&=\;\;-(m-1)||V|g(u,w){\rm tr}[id]{\rm Vol}(S^{n-1}).\nonumber
\end{align}

Therefore, we get
\begin{align}
	&\int_{\|\xi\|=1} \operatorname{tr}\bigg[\sigma_{2}(\mathcal{AB}) \sigma_{-2 m-2}\left(\widetilde{D}_v^{-2 m}\right)\bigg] \sigma(\xi)\\
	&=\;\;\bigg(\frac{3-m}{12} s g(u,w)-\frac{1}{3}{\rm Ric}(u,w)-(m-1)|V|^2g(u,w)\bigg){\rm tr}[id]{\rm Vol}(S^{n-1}).\nonumber
\end{align}

\noindent{\bf (II-4)} For $-i\sum_{j=1}^{2m} \partial_{\xi_{j}}\left[\sigma_{2}(\mathcal{A} \mathcal{B})\right] \partial_{x_{j}}\left[\sigma_{-2 m-1}\left(\widetilde{D}_v^{-2 m}\right)\right]$:

\begin{align}\label{2-2m-1}
	&-i\sum_{j=1}^{2m} \partial_{\xi_{j}}\left[\sigma_{2}(\mathcal{A} \mathcal{B})\right] \partial_{x_{j}}\left[\sigma_{-2 m-1}\left(\widetilde{D}_v^{-2 m}\right)\right]\\
	&=\;\;\frac{2 m }{3}\|\xi\|^{-2 m-2}\sum_{a,b,j=1}^{2m}  \operatorname{Ric}_{a b}  \xi_{a}\delta^{b}_{j} \bigg(c(u)c(dx_j)c(w)c(\xi)+c(u)c(\xi)c(w)c(dx_j)\bigg)\nonumber\\
	&\;\;\;\;-\frac{ m }{4}\|\xi\|^{-2 m-2} \sum_{a,b,j,t,s=1}^{2m} \operatorname{R}_{b a t s}\xi_{a}\delta^{b}_{j}\bigg(c(u)c(dx_j)c(w)c(\xi)+c(u)c(\xi)c(w)c(dx_j) \bigg) c(e_s) c(e_t) \nonumber\\
	&\;\;\;\;+\frac{ m }{4}\|\xi\|^{-2 m-2} \sum_{a,b,j,t,s=1}^{2m} \operatorname{R}_{b a t s}\xi_{a}\delta^{b}_{j}\bigg(c(u)c(dx_j)c(w)c(\xi) +c(u)c(\xi)c(w)c(dx_j) \bigg)\hat{c}(e_s) \hat{c}(e_t).  \nonumber\\
\end{align}

\noindent{\bf (II-4-$\mathbb{A}$)}

\begin{align}
	&\int_{\|\xi\|=1} {\rm tr}\biggl\{\frac{2 m }{3}\|\xi\|^{-2 m-2}\sum_{a,b,j=1}^{2m}  \operatorname{Ric}_{a b}  \xi_{a}\delta^{b}_{j}\bigg(c(u)c(dx_j)c(w)c(\xi)+c(u)c(\xi)c(w)c(dx_j)\bigg)\biggr\}(x_0)\sigma(\xi)\nonumber\\
	&=\int_{\|\xi\|=1}{\rm tr} \biggl\{\frac{2 m }{3}\|\xi\|^{-2 m-2}\sum_{a,b,f=1}^{2m}  \operatorname{Ric}_{a b}  \xi_{a}\xi_{f}\bigg(c(u)c(e_b)c(w)c(e_f)+c(u)c(e_f)c(w)c(e_b)\bigg)\biggr\}(x_0)\sigma(\xi)\nonumber\\
	&=\;\;\frac{1}{3}\sum_{a,b=1}^{2m}  \operatorname{Ric}_{a b}  {\rm tr}\bigg(c(u)c(e_b)c(w)c(e_a)+c(u)c(e_a)c(w)c(e_b)\bigg)(x_0){\rm Vol}(S^{n-1})\nonumber\\
	&=\;\;\frac{2}{3}\sum_{a,b=1}^{2m}  \operatorname{Ric}_{a b}  {\rm tr}\bigg(c(u)c(e_b)c(w)c(e_a)\bigg)(x_0){\rm Vol}(S^{n-1})\nonumber\\
	&=\;\;\bigg(\frac{4}{3}\operatorname{Ric}(u,w)-\frac{2}{3}s g(u,w)\bigg){\rm tr}[id] {\rm Vol}(S^{n-1}).\nonumber
\end{align}

\noindent{\bf (II-4-$\mathbb{B}$)}

\begin{align}
	&\int_{\|\xi\|=1}\operatorname{tr} \biggl\{-\frac{ m }{4}\|\xi\|^{-2 m-2} \sum_{a,b,j,t,s=1}^{2m} \operatorname{R}_{b a t s}\xi_{a}\delta^{b}_{j}\bigg(c(u)c(dx_j)c(w)c(\xi) +c(u)c(\xi)c(w)c(dx_j)\bigg) c(e_s) c(e_t)\biggr\}(x_0)\sigma(\xi)\nonumber\\
	&=\int_{\|\xi\|=1}\operatorname{tr} \biggl\{-\frac{ m }{4}\|\xi\|^{-2 m-2} \sum_{a,b,t,s,f=1}^{2m} \operatorname{R}_{b a t s}\xi_{a}\xi_{f}\bigg(c(u)c(e_b)c(w)c(e_f) +c(u)c(e_f)c(w)c(e_b) \bigg)c(e_s) c(e_t)\biggr\}(x_0)\sigma(\xi)\nonumber\\
&=-\frac{1 }{8}\sum_{a,b,t,s=1}^{2m} \operatorname{R}_{b a t s}\operatorname{tr}\bigg(c(u)c(e_b)c(w)c(e_a) c(e_s) c(e_t)\bigg){\rm Vol}(S^{n-1})\nonumber\\
&-\frac{1 }{8}\sum_{a,b,t,s=1}^{2m} \operatorname{R}_{ab t s}\operatorname{tr}\bigg(c(u)c(e_b)c(w)c(e_a)c(e_s) c(e_t)\bigg){\rm Vol}(S^{n-1})\nonumber\\
	&=\;\;0.\nonumber
\end{align}

\noindent{\bf (II-4-$\mathbb{C}$)}

\begin{align}
	&\int_{\|\xi\|=1}\operatorname{tr} \biggl\{-\frac{ m }{4}\|\xi\|^{-2 m-2} \sum_{a,b,j,t,s=1}^{2m} \operatorname{R}_{b a t s}\xi_{a}\delta^{b}_{j}\bigg(c(u)c(dx_j)c(w)c(\xi) +c(u)c(\xi)c(w)c(dx_j)\bigg) \hat{c}(e_s)\hat{ c}(e_t)\biggr\}(x_0)\sigma(\xi)\nonumber\\
	&=\int_{\|\xi\|=1}\operatorname{tr} \biggl\{-\frac{ m }{4}\|\xi\|^{-2 m-2} \sum_{a,b,t,s,f=1}^{2m} \operatorname{R}_{b a t s}\xi_{a}\xi_{f}\bigg(c(u)c(e_b)c(w)c(e_f) +c(u)c(e_f)c(w)c(e_b) \bigg)\hat{c}(e_s)\hat{ c}(e_t)\biggr\}(x_0)\sigma(\xi)\nonumber\\
&=-\frac{1 }{8}\sum_{a,b,t,s=1}^{2m} \operatorname{R}_{b a t s}\operatorname{tr}\bigg(c(u)c(e_b)c(w)c(e_a)\hat{c}(e_s)\hat{ c}(e_t)\bigg) {\rm Vol}(S^{n-1})\nonumber\\
&-\frac{1 }{8}\sum_{a,b,t,s=1}^{2m} \operatorname{R}_{ab t s}\operatorname{tr}\bigg(c(u)c(e_b)c(w)c(e_a)\hat{c}(e_s)\hat{ c}(e_t)\bigg) {\rm Vol}(S^{n-1})\nonumber\\
	&=\;\;0.\nonumber
\end{align}

Therefore, we get
\begin{align}
	&\int_{\|\xi\|=1} \operatorname{tr}\bigg[-i \sum_{j=1}^{2m} \partial_{\xi_{j}}\left[\sigma_{2}(\mathcal{A} \mathcal{B})\right] \partial_{x_{j}}\left[\sigma_{-2 m-1}\left(\widetilde{D}_v^{-2 m}\right)\right]\bigg] (x_0)\sigma(\xi)\\
	&=\;\;\bigg(\frac{4}{3}\operatorname{Ric}(u,w)-\frac{2}{3}s g(u,w)\bigg){\rm tr}[id] {\rm Vol}(S^{n-1}).\nonumber
\end{align}

\noindent{\bf (II-5)} For $-i \sum_{j=1}^{2m} \partial_{\xi_{j}}\left[\sigma_{1}(\mathcal{A} \mathcal{B})\right] \partial_{x_{j}}\left[\sigma_{-2 m}\left(\widetilde{D}_v^{-2 m}\right)\right]$:
\begin{align}\label{1-2m}
	&\partial_{x_{j}}\left[\sigma_{-2 m}\left(\widetilde{D}_v^{-2 m}\right)\right](x_0)\\
	&=\partial_{x_{j}}\bigg[\|\xi\|^{-2 m-2}\sum_{a,b,l,k=1}^{2m} \left(\delta_{a b}-\frac{m}{3} R_{a l b k} x^{l} x^{k}\right) \xi_{a} \xi_{b}\bigg](x_0)\nonumber\\
	&=\;\;0,\nonumber
\end{align}
then
\begin{align}
	&\int_{\|\xi\|=1} \operatorname{tr}\bigg[-i\sum_{j=1}^{2m} \partial_{\xi_{j}}\left[\sigma_{1}(\mathcal{A} \mathcal{B})\right] \partial_{x_{j}}\left[\sigma_{-2 m}\left(\widetilde{D}_v^{-2 m}\right)\right]\bigg] (x_0)\sigma(\xi)=0.\nonumber
\end{align}

\noindent{\bf (II-6)} For $-\frac{1}{2} \sum_{j ,l=1}^{2m} \partial_{\xi_{j}} \partial_{\xi_{l}}\left[\sigma_{2}(\mathcal{A} \mathcal{B})\right] \partial_{x_{j}} \partial_{x_{l}}\left[\sigma_{-2 m}\left(\widetilde{D}_v^{-2 m}\right)\right]$:

\begin{align}
&\sum_{j, l=1} ^{2m}\partial_{\xi_{j}} \partial_{\xi_{l}}\left[\sigma_{2}(\mathcal{A} \mathcal{B})\right]\nonumber\\
&=\sum_{j, l=1} ^{2m}\partial_{\xi_{j}} \partial_{\xi_{l}}\left[c(u)c(\xi)c(w)c(\xi)\right]\nonumber\\
&=\sum_{j, l=1} ^{2m}[-c(u)c(dx_l)c(w)c(dx_j)-c(u)c(dx_j)c(w)c(dx_l)]
\end{align}

\begin{align}
&\sum_{j, l=1} ^{2m}\partial_{x_{j}} \partial_{x_{l}}\left[\sigma_{-2 m}\left(\widetilde{D}_v^{-2 m}\right)\right]\nonumber\\
&=\sum_{j, l=1} ^{2m}\partial_{x_{j}} \partial_{x_{l}}\left[\|\xi\|^{-2 m-2}\sum_{a,b,\hat{j},k=1}^{2m} \left(\delta_{a b}-\frac{m}{3} R_{a \hat{j} b k} x^{\hat{j}} x^{k}\right) \xi_{a} \xi_{b}\right]\nonumber\\
&=-\frac{m}{3}\sum_{j, l,a,b,\hat{j},k=1} ^{2m}\|\xi\|^{-2 m-2}\bigg(R_{a\hat{j}bk}\delta_l^{\hat{j}}\delta_j^k-R_{a\hat{j}bk}\delta_l^k\delta_j^{\hat{j}}\bigg)\xi_a\xi_b.
\end{align}

\begin{align}\label{1-2m}
	&-\frac{1}{2} \sum_{j, l=1} ^{2m}\partial_{\xi_{j}} \partial_{\xi_{l}}\left[\sigma_{2}(\mathcal{A} \mathcal{B})\right] \partial_{x_{j}} \partial_{x_{l}}\left[\sigma_{-2 m}\left(\widetilde{D}_v^{-2 m}\right)\right]\\
	&=-\frac{m}{6}\|\xi\|^{-2 m-2}\sum_{j, l,a,b,\hat{j},k=1} ^{2m}\|\xi\|^{-2 m-2}\bigg(R_{a\hat{j}bk}\delta_l^{\hat{j}}\delta_j^k-R_{a\hat{j}bk}\delta_l^k\delta_j^{\hat{j}}\bigg)c(u)c(dx_l)c(w)c(dx_j)\xi_a\xi_b\nonumber\\
&-\frac{m}{6}\|\xi\|^{-2 m-2}\sum_{j, l,a,b,\hat{j},k=1} ^{2m}\|\xi\|^{-2 m-2}\bigg(R_{a\hat{j}bk}\delta_l^{\hat{j}}\delta_j^k-R_{a\hat{j}bk}\delta_l^k\delta_j^{\hat{j}}\bigg)c(u)c(dx_j)c(w)c(dx_l)\xi_a\xi_b.\nonumber
\end{align}

then
\begin{align}
	&\int_{\|\xi\|=1} {\rm tr}\biggl\{-\frac{1}{2} \sum_{j, l=1}^{2m} \partial_{\xi_{j}} \partial_{\xi_{l}}\left[\sigma_{2}(\mathcal{A} \mathcal{B})\right] \partial_{x_{j}} \partial_{x_{l}}\left[\sigma_{-2 m}\left(\widetilde{D}_v^{-2 m}\right)\right]\biggr\}(x_0)\sigma(\xi)\nonumber\\
	&=\int_{\|\xi\|=1} \biggl\{-\frac{m}{6}\|\xi\|^{-2 m-2}\sum_{a,b,j,l=1}^{2m}\bigg({\rm R}_{albj}+{\rm R}_{ajbl} \bigg){\rm tr}[c(u)c(dx_l)c(w)c(dx_j)]\xi_{a}\xi_{b}\biggr\}(x_0)\sigma(\xi)\nonumber\\
&+\int_{\|\xi\|=1} \biggl\{-\frac{m}{6}\|\xi\|^{-2 m-2}\sum_{a,b,j,l=1}^{2m}\bigg({\rm R}_{albj}+{\rm R}_{ajbl} \bigg){\rm tr}[c(u)c(dx_j)c(w)c(dx_l)]\xi_{a}\xi_{b}\biggr\}(x_0)\sigma(\xi)\nonumber\\
	&=-\frac{1}{6}\sum_{a,j,l=1}^{2m}{\rm R}_{alaj}{\rm tr}[c(u)c(dx_l)c(w)c(dx_j)+c(u)c(dx_j)c(w)c(dx_l)](x_0){\rm Vol}(S^{n-1})\nonumber\\
	&=\bigg(\frac{1}{3}s g(u,w)-\frac{2}{3}{\rm Ric}(u,w)\bigg){\rm tr}[id] {\rm Vol}(S^{n-1}).\nonumber
\end{align}

Thus, we get

\begin{align}\label{zabdt}
	&\int_{\|\xi\|=1} {\rm tr}\biggl\{	\sigma_{-2 m}\left(\mathcal{A} \mathcal{B} \widetilde{D}_v\widetilde{D}_v^{-2 m}\right)\biggr\}(x_0)\sigma(\xi)\\
	&=\bigg(\frac{2-m}{12} s g(u,w)+(2-m)|V|^2g(u,w)-\frac{1}{6}{\rm Ric}(u,w)\bigg){\rm tr}[id] {\rm Vol}(S^{n-1}).\nonumber
\end{align}
Further, we obtain
\begin{align}\label{z2}
	\mathscr{S}_{2}=&\mathrm{Wres}\bigg(c(u)\widetilde{D}_vc(w) \widetilde{D}_v \widetilde{D}_v^{-2m}\bigg)\\
	=&\;2^{2m} \frac{2 \pi^{m}}{\Gamma\left(m\right)}\int_{M}\biggl\{\frac{2-m}{12} s g(u,w)+(2-m)|V|^2g(u,w)-\frac{1}{6}{\rm Ric}(u,w)\biggr\}d{\rm Vol}_M.\nonumber
\end{align}
Hence, by summing $\mathscr{S}_{1}$ and $\mathscr{S}_{2}$, Theorem \ref{thm} holds.

\end{proof}

\section*{ Declarations}
\textbf{Ethics approval and consent to participate:} Not applicable.

\textbf{Consent for publication:} Not applicable.

\textbf{Availability of data and materials:} The authors confrm that the data supporting the findings of this study are available within the article.

\textbf{Competing interests:} The authors declare no competing interests.

\textbf{Author Contributions:} All authors contributed to the study conception and design. Material preparation,
data collection and analysis were performed by TW and YW. The first draft of the manuscript was written
by TW and all authors commented on previous versions of the manuscript. All authors read and approved
the final manuscript.
\section*{Acknowledgements}
 The authors were supported by Science and Technology Development Plan Project of Jilin Province, China: No.20260102245JC, NSFC. No.12401059 and Liaoning Province Science and Technology Plan Joint Project 2023-BSBA-118. The authors thank the referee for his (or her) careful reading and helpful comments.
\section*{References}

\end{document}